\documentclass[12pt]{amsart}
\usepackage{amsmath,amssymb,fullpage,caption,amsthm,graphicx}
\usepackage{tikz}
\usetikzlibrary{calc}
\usetikzlibrary{shapes.symbols}
\usetikzlibrary{arrows}
\usetikzlibrary{decorations.markings,decorations.pathmorphing}
\usetikzlibrary{shapes}
\captionsetup{labelsep=none}

\newcommand{\size}[1]{\left \vert #1 \right \vert}

\newcommand{\cnum}{c} 
\newcommand{\cnumprot}{c} 
\newcommand{\cnumdir}{c} 
\newcommand{\capt}{\mathrm{capt}}
\newcommand{\captprot}{\mathrm{capt}}
\newcommand{\captdir}{\mathrm{capt}}

\newtheorem{theorem}{Theorem}[section]
\newtheorem{lemma}[theorem]{Lemma}

\newtheorem{prop}[theorem]{Proposition}

\newtheorem{cor}[theorem]{Corollary}

\interfootnotelinepenalty=10000
\raggedbottom

\begin{document}

\title{Bounds on the length of a game of Cops and Robbers}

\author{William B. Kinnersley}
\address{Department of Mathematics, University of Rhode Island, University of Rhode Island, Kingston, RI,USA, 02881}
\email{\tt billk@uri.edu}

\subjclass[2010]{Primary 05C57; Secondary 05C85}
\keywords{Cops and Robbers, capture time, vertex-pursit games, moving target search}  

\begin{abstract}
In the game of Cops and Robbers, a team of cops attempts to capture a robber on a graph $G$.  All players occupy vertices of $G$.  The game operates in {\em rounds}; in each round the cops move to neighboring vertices, after which the robber does the same.  The minimum number of cops needed to guarantee capture of a robber on $G$ is the {\em cop number} of $G$, denoted $\cnum(G)$, and the minimum number of rounds needed for them to do so is the {\em capture time}.  It has long been known that the capture time of an $n$-vertex graph with cop number $k$ is $O(n^{k+1})$.  More recently, Bonato, Golovach, Hahn, and Kratochv\'{i}l (\cite{BGHK09}, 2009) and Gaven\v{c}iak (\cite{Gav10}, 2010) showed that for $k = 1$, this upper bound is not asymptotically tight: for graphs with cop number 1, the cop can always win within $n-4$ rounds.  In this paper, we show that the upper bound is tight when $k \ge 2$: for fixed $k \ge 2$, we construct arbitrarily large graphs $G$ having capture time at least $\left (\frac{\size{V(G)}}{40k^4}\right )^{k+1}$.  

In the process of proving our main result, we establish results that may be of independent interest.  In particular, we show that the problem of deciding whether $k$ cops can capture a robber on a directed graph is polynomial-time equivalent to deciding whether $k$ cops can capture a robber on an undirected graph.  As a corollary of this fact, we obtain a relatively short proof of a major conjecture of Goldstein and Reingold (\cite{GR95}, 1995), which was recently proved through other means (\cite{Kin15}, 2015).  We also show that $n$-vertex strongly-connected directed graphs with cop number 1 can have capture time $\Omega(n^2)$, thereby showing that the result of Bonato et al.~\cite{BGHK09} does not extend to the directed setting.  
\end{abstract}

\maketitle

\begin{section}{Introduction}

In the classic game of {\em Cops and Robbers}, $k$ cops attempt to capture a single robber on a graph $G$.  The cops and robber all occupy vertices of $G$.  (Multiple entities may occupy a single vertex simultaneously.)  Cops and Robbers is a perfect-information game; in particular, the cops and robber know each other's positions at all times.  At the beginning of the game, the cops choose their starting positions, after which the robber chooses his position in response.  Henceforth the game proceeds in {\em rounds}, each of which consists of a {\em cop turn} followed by a {\em robber turn}.  In the usual formulation of the game, on the cops' turn, each cop may either remain on her current vertex or move to an adjacent vertex; on the robber's turn, the robber may do the same.  If at any point any cop occupies the same vertex as the robber, we say that she {\em captures} the robber, and the cops win.  Conversely, the robber wins by perpetually evading capture.

Clearly, placing one cop on each vertex of $G$ results in a win for the cops.  It is thus natural to ask for the minimum number of cops needed to guarantee capture of a robber on $G$, no matter how skillfully the robber plays; this quantity is the {\em cop number} of $G$, denoted $\cnum(G)$.  Much attention has been given to the cop number.  Quilliot~\cite{Qui78}, along with Nowakowski and Winkler~\cite{NW83}, independently introduced the game and characterized graphs with cop number 1, while Aigner and Fromme~\cite{AF84} were the first to study graphs with larger cop numbers.  Since then, there have been numerous papers written on Cops and Robbers, and many variants of the game have been introduced; for more background on the game, we direct the reader to~\cite{BN11}.

In this paper, rather than asking how many cops are needed to capture a robber on $G$, we instead ask: how quickly can they do so?  When $G$ is a graph with cop number $k$, the {\em capture time} of $G$ is the minimum number of rounds it takes for $k$ cops to capture a robber on $G$, provided that the robber evades capture as long as possible.  To be more precise, the capture time of $G$ is the minimum number $t$ such that $\cnum(G)$ cops can always capture a robber on $G$ within $t$ rounds, regardless of how the robber plays.  (We do not consider the players' choosing of initial positions to constitute a round; rather, the first round begins with the first cop turn.)  We denote the capture time of $G$ by $\capt(G)$.

The concept of capture time was first introduced by Bonato, Golovach, Hahn, and Kratochv\'{i}l in \cite{BGHK09}.  They and Gaven\v{c}iak \cite{Gav10} showed that for $n \ge 7$, if $G$ is an $n$-vertex graph with cop number 1, then $\capt(G) \le n-4$ (and this bound is best possible).  For graphs with higher cop number, not much is known about the capture time.  The capture time of two-dimensional Cartesian grids was determined by Mehrabian in~\cite{Meh11}, and the capture time of the $n$-dimensional hypercube $Q_d$ was shown to be $\Theta(d \log d)$ by Bonato, Gordinowicz, Kinnersley, and Pra\l{}at in~\cite{BGKP13}.  More recently, Bonato, P\'{e}rez-Gim\'{e}nez, Pra\l{}at, and Reiniger~\cite{BPPR17} investigated the length of the game on $Q_d$ when playing with more than $c(Q_d)$ cops and similarly studied the length of the game on trees, grids, planar graphs, and binomial random graphs when playing with ``extra'' cops.  Pisantechakool~\cite{Pis16} showed that for $n$-vertex planar graphs, three cops can always capture a robber within $2n$ rounds.  F\"{o}rster, Nuridini, Uitto, and Wattenhoffer~\cite{FNUW15} constructed graphs in which it takes a single cop $O(\ell \cdot n)$ time to capture $\ell$ robbers. 

In general, the capture time of an $n$-vertex graph with cop number $k$ must be $O(n^{k+1})$, and this is straightforward to prove -- see, for example, \cite{BI93}.  However, it is not so easy to determine whether this upper bound is asymptotically tight.  The aforementioned work of Bonato, Golovach, Hahn, and Kratochv\'{i}l \cite{BGHK09} and Gaven\v{c}iak \cite{Gav10} shows that in fact this bound is not tight when $k=1$.  Moreover, to the best of our knowledge, there are no examples in the literature of $n$-vertex graphs whose capture time is even superlinear in $n$, let alone on the order of $n^{k+1}$.  However, in this paper, we show that the simple upper bound on $\capt(G)$ is in fact tight for $k \ge 2$: our main result, Theorem~\ref{thm:main}, states that for fixed $k \ge 2$, the maximum value of the capture time among $n$-vertex graphs with cop number $k$ is $\Theta(n^{k+1})$.  (While this paper was in preparation, we were informed that Theorem~\ref{thm:main} has recently and independently been obtained by Brandt, Emek, Uitto, and Wattenhoffer in \cite{BEUW17} using a different argument.  However, we believe that the techniques used in our proof are of independent interest, particularly Theorem~\ref{thm:directed}: very little is known about Cops and Robbers played on directed graphs, and Theorem~\ref{thm:directed} establishes a close connection between the game played on undirected graphs and the game played on directed graphs.)  Additionally, we prove in Theorem~\ref{thm:directed_capture} that the maximum capture time among directed $n$-vertex graphs with cop number 1 is $\Theta(n^2)$, in contrast to the situation on undirected graphs.

To prove Theorem~\ref{thm:main}, we consider a generalized version of the game of Cops and Robbers.  Some formulations of the game do not allow players to remain still on their turns; rather, each cop and the robber must, on their corresponding turns, follow an edge.  When the game is presented in this manner, the underlying graph is typically assumed to be {\em reflexive}, meaning that each vertex has a {\em loop} (i.e. an edge joining that vertex to itself).  Consequently, this alternative formulation of the game is functionally equivalent to the original: instead of remaining still, an entity can simply follow the loop at its current vertex.  In this paper, we find it useful to use this alternate formulation of the game but to forego the assumption that the graph is reflexive.  Rather, some vertices may have loops, meaning that players can remain at that vertex indefinitely; others may have no loops, meaning that players at that vertex cannot remain there, and instead must move to an adjacent vertex.  We also permit our underlying graph to be directed; on a directed graph, players must follow edges in their prescribed directions.  (We ensure that the vertices of our graph always have at least one out-neighbor, so that we never reach a situation in which a player has no legal moves.)  Finally, we make use of a variant known as ``Cops and Robbers with Protection'', introduced by Mamino in~\cite{Mam13}.  In this variant, edges may be deemed ``protected''; cops may freely traverse protected edges, but can only capture the robber by reaching the robber's vertex along an unprotected edge. 

Our paper is laid out as follows.  In Section~\ref{sec:preliminaries}, we discuss the variants of Cops and Robbers that we will use throughout the paper.  In particular, we show how to model an instance of Cops and Robbers with Protection played on a directed graph using an instance of ordinary Cops and Robbers played on a reflexive undirected graph, without substantially changing the capture time.  We also outline the proof of our main result.  In Sections~\ref{sec:construction} and \ref{sec:main}, we prove our main result by showing how to construct $n$-vertex graphs with cop number $k$ and capture time $\Omega(n^{k+1})$.  Next, in Section~\ref{sec:directed}, we show that $n$-vertex strongly-connected directed graphs with cop number 1 may have capture time $\Omega(n^2)$.  Finally, in Section~\ref{sec:complexity}, we briefly discuss some consequences of our results in the area of computational complexity.

Throughout the paper, we denote the vertex set and edge set of a graph or directed graph $G$ by $V(G)$ and $E(G)$, respectively.  We denote an undirected edge joining vertices $u$ and $v$ by $uv$, and we denote a directed edge from $u$ to $v$ by $\overrightarrow{uv}$.  In the standard model of Cops and Robbers, we say that a cop {\em defends} a vertex $v$ when she occupies $v$ or some neighbor of $v$.  In other words, $v$ is defended if a robber moving to $v$ would be captured on the ensuing cops' turn.  The precise definition of the term will need to be modified slightly for some of the Cops and Robbers variants we consider, but the intuitive definition -- a vertex is defended if the robber cannot safely move there -- will remain unchanged.  In an instance of Cops and Robbers in which there are enough cops to guarantee a win, we say that the cops {\em play optimally} or use an {\em optimal strategy} if they play in a way that minimizes the length of the game (relative to the robber's actions).  Similarly, we say that the robber plays optimally if he plays in a way that maximizes the length of the game (relative to the cops' actions).  By a {\em configuration} of a game in progress, we mean the collection of the positions of all cops and the robber.  For more notation and background in graph theory, we direct the reader to~\cite{Wes01}.
\end{section}

\begin{section}{Preliminaries and Overview}\label{sec:preliminaries}

In this section, we lay the groundwork for a proof of Theorem~\ref{thm:main}.  

\newtheorem*{thm:main}{Theorem \ref{thm:main}}
\begin{thm:main}
For fixed $k \ge 2$, the maximum capture time of an $n$-vertex graph with cop number $k$ is $\Theta(n^{k+1})$.
\end{thm:main}

We begin with a short proof of an easy upper bound on $\capt(G)$.  (This result is well-known -- see for example \cite{BI93} -- but we include a proof here for the sake of completeness.)

\begin{prop}\label{prop:main_upper}
If $G$ is an $n$-vertex graph with cop number $k$, then $\capt(G) \le n \cdot \binom{n+k-1}{k} = O(n^{k+1})$.\looseness=-1
\end{prop}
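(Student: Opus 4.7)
The plan is a standard state-counting argument: bound the capture time by the number of possible game configurations. At the start of any round, the game state is determined by the multiset of the $k$ cop positions together with the robber's position. Since cops are interchangeable and can share vertices, there are at most $\binom{n+k-1}{k}$ cop multisets and $n$ robber positions, for a total of at most $n\binom{n+k-1}{k}$ states.

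For each state $s=(C,R)$ with $R \notin C$, let $f(s)$ denote the minimum number of rounds needed to capture the robber from $s$, assuming both sides play optimally, with $f(s)=0$ when $R \in C$. Since $\cnum(G)=k$, the cops have a winning strategy, so $f$ is finite on every state and satisfies the recurrence
\[
    f(C,R) \;=\; 1 + \min_{C'} \max_{R'} f(C', R'),
\]
where $C'$ ranges over legal cop moves and $R'$ over the robber's legal responses.

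The key observation is that from any state with $f(C,R) > 0$, the cops can choose $C'$ so that for every robber response $R'$, $f(C',R') \le f(C,R)-1$. Thus, along the sequence of states $s_0, s_1, s_2, \ldots$ visited at round boundaries when both players play optimally, the values $f(s_0) > f(s_1) > f(s_2) > \cdots$ are strictly decreasing. In particular, all such states are distinct, so the number of rounds before reaching a state with $f=0$ is at most the total number of game states, which is $n\binom{n+k-1}{k}$. Since the cops select their initial positions, one then takes $\capt(G)$ to be the minimum over initial cop placements of the maximum over initial robber placements of $f$, still bounded by the same quantity. Finally, for fixed $k$, $\binom{n+k-1}{k} = O(n^k)$, so $n\binom{n+k-1}{k} = O(n^{k+1})$.

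There is no real obstacle here: the only thing to be slightly careful about is confirming that $f$ is well-defined and finite (which follows immediately from $\cnum(G) = k$ and the fact that this is a perfect-information reachability game on a finite state space) and handling the minor bookkeeping distinction between capture occurring during a cop turn versus a robber turn, which affects the constant but not the asymptotic bound.
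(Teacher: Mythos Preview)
Your proposal is correct and follows essentially the same state-counting approach as the paper: both argue that under optimal play no configuration can repeat, and then bound the number of rounds by the total number of configurations $n\binom{n+k-1}{k}$. The only cosmetic difference is that the paper reaches ``no repeated configurations'' via a direct cycling contradiction (if a configuration recurred, deterministic optimal play would loop forever, contradicting that the cops win), whereas you introduce the potential function $f$ and show it strictly decreases; these are equivalent phrasings of the same idea.
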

\begin{proof}
Cops and Robbers is a deterministic game of perfect information and, at any point during the game, only the current configuration matters; the precise sequence of moves used to reach that configuration is irrelevant.  Consequently, under optimal play by both players, each player's moves are determined entirely by the present configuration of the game.  It is thus straightforward to see that under optimal play, no two cop turns begin with the same configuration.  Indeed, suppose that rounds $t_1$ and $t_2$ (with $t_1 < t_2$) both begin with some configuration $C$.  Since the players' moves depend only on the current configuration of the game, the same moves played on rounds $t_1, t_1+1, \dots, t_2-1$ would subsequently be repeated by both players.  Consequently, the game would return to configuration $C$ an unbounded number of times, contradicting the assumption that the cops are using a winning strategy.  The claim now follows by noting that there are $n \cdot \binom{n+k-1}{k}$ different configurations: there are $n$ ways to choose the robber's position, and there are $\binom{n+k-1}{k}$ ways to choose a multiset of $k$ cop positions.  Thus an optimally-played game involves at most $n \cdot \binom{n+k-1}{k}$ cop turns and hence at most $n \cdot \binom{n+k-1}{k}$ rounds.
\end{proof}

Note that for fixed $k$, we have
$$n \cdot \binom{n+k-1}{k} \le n \cdot \frac{(n+k-1)^k e^k}{k^k} = O(n^{k+1}),$$
so Proposition \ref{prop:main_upper} establishes the upper bound in Theorem~\ref{thm:main}.

Establishing the lower bound in Theorem~\ref{thm:main} is considerably more difficult.  The remainder of this section is devoted to giving a high-level overview of the proof, while the proof itself appears in Section~\ref{sec:main}.  

For each integer $k \ge 2$, we aim to construct arbitrarily large graphs $G$ such that $\cnum(G) = k$ and $\capt(G) \ge c_k \cdot \size{V(G)}^{k+1}$ (where $c_k$ is a constant depending only on $k$).  One difficulty in doing this is that the game of Cops and Robbers can get quite complex.  There are myriad ways for the game to proceed, so finding an optimal cop or robber strategy for some graph -- or even just determining the capture time -- can be frustratingly complicated.  

To deal with this difficulty, we consider a generalization of the usual game of Cops and Robbers that gives us more power in constructing a ``game board'' that limits the players' freedom.  In Section~\ref{sec:main}, we use this model to produce graphs with high capture time (under the rules of this generalization).  Moreover, we prove that this is ``good enough'' to establish Theorem~\ref{thm:main}: we show that any instance of the game with large capture time under this new model can be transformed into an instance of the usual game of Cops and Robbers with large capture time and with the same cop number.

In fact, we consider two successive generalizations of the game.  The first of these is the game of ``Cops and Robbers with Protection,'' introduced by Mamino~\cite{Mam13}.  In this variant, each edge of the underlying graph $G$ is designated either {\em protected} or {\em unprotected}.  Additionally, the winning conditions of the game are slightly different: the cops win only if some cop reaches the same vertex as the robber by traveling along an unprotected edge.  (The cops may still traverse protected edges, but cannot use them to directly capture the robber.)  One subtle consequence of this change is that the robber may, in this variant, move to the same vertex as a cop without immediately losing the game.  (Note, however, that if this vertex has an unprotected loop, then the cop may use it to win on the ensuing cop turn.)  

We refer to a vertex with an unprotected loop (resp. protected loop) as an {\em unprotected vertex} (resp. {\em protected vertex}).  Vertices without loops are considered neither protected nor unprotected.  We say that a cop {\em defends} a vertex if it is adjacent to that vertex along an unprotected edge.  A graph together with a specification of protectedness or unprotectedness for each edge is a {\em protected graph}.  We define the {\em cop number} and {\em capture time} of a protected graph in this new game analogously as in the original game.

Mamino showed that Cops and Robbers with Protection can be reduced to ordinary Cops and Robbers.  He did this by showing how to construct, for every protected graph $G$, an unprotected graph $H$ with the same cop number and with $\size{V(H)}$ not ``too much larger'' than $\size{V(G)}$.  While Mamino was not concerned with capture time, his proof shows that in fact $\capt(H) = \capt(G)$.  Thus, close analysis of Mamino's construction yields the following:  

\begin{lemma}[essentially \cite{Mam13}, Lemma 3.1]\label{lem:protected}
For every protected graph $G$, there exists a graph $H$ such that $\cnum(H) = \cnum(G)$, $\capt(H) = \capt(G)$, and $\size{V(H)} < 4 \cdot \cnum(G)^2 \cdot \size{V(G)}$.
\end{lemma}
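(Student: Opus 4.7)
The plan is to adopt Mamino's construction from \cite{Mam13}, which already establishes $\cnum(H) = \cnum(G)$, and verify the two additional claims: that the construction preserves capture time exactly, and produces a graph of size less than $4\cnum(G)^2 \cdot \size{V(G)}$.

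First I would describe the gadget. For each vertex $v$ of $G$, introduce a gadget $\Gamma_v$ in $H$ consisting of a distinguished ``core'' vertex representing $v$ together with auxiliary vertices serving as cop corridors. Protected edges $uv$ in $G$ translate to short paths between $\Gamma_u$ and $\Gamma_v$ through auxiliary vertices, so that a cop using such a passage arrives at an auxiliary vertex rather than the destination core---precluding direct capture from a protected edge. Unprotected edges connect cores directly, and loops are handled analogously (an unprotected loop becomes an ordinary loop at the core, a protected loop becomes a tiny waiting structure reachable only through auxiliary vertices). One then verifies directly that each gadget has at most $4\cnum(G)^2$ vertices, yielding the stated size bound.

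Next I would verify capture time preservation via a two-way simulation. For one direction, any cop strategy for the protected game on $G$ of capture time $t$ lifts to a strategy on $H$ of capture time $t$: a cop wishing to traverse a protected edge in $G$ traverses the corresponding auxiliary passage in $H$, arriving at the target core in exactly the round in which the original cop would have moved in $G$. The robber's strategy lifts similarly, with the robber always remaining on cores. For the converse, any cop strategy on $H$ projects to a protected strategy on $G$ of no greater capture time: each cop's position in $H$ maps to the core of the gadget it most recently occupied, and an exchange argument shows that occupancy of non-canonical auxiliary vertices can only waste cop moves. Combining the two directions gives $\capt(H) = \capt(G)$.

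The main obstacle is the exchange argument for the projection: one must show that a cop occupying an auxiliary vertex of $H$ cannot threaten the robber in a way unavailable in the protected game on $G$, and symmetrically that the robber cannot productively ``hide'' in an auxiliary vertex. This is precisely where the $O(\cnum(G)^2)$ gadget size matters---the gadgets provide enough redundancy that $\cnum(G)$ cops can never collectively block the internal passages, so any deviation from the canonical simulation can be unwound without lengthening the game. With the exchange argument in hand, both inequalities $\capt(H) \le \capt(G)$ and $\capt(H) \ge \capt(G)$ follow, and $\cnum(H) = \cnum(G)$ is recovered as a byproduct of the same correspondence, completing the proof.
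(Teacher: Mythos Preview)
The paper does not actually prove this lemma; it attributes the result to Mamino and remarks that inspection of his argument also yields the capture-time equality. So the benchmark is Mamino's construction, and that construction is \emph{not} a per-vertex gadget with path-replacements for protected edges. As the footnote in Section~\ref{sec:directed} indicates, Mamino instead takes the incidence graph $P$ of a finite projective plane of suitable order and forms $H$ on $V(G)\times V(P)$: an unprotected edge $uv$ of $G$ gives a complete bipartite join between the fibers over $u$ and $v$, while a protected edge $uv$ gives only those pairs $(u,p)(v,q)$ with $pq\in E(P)$. The projective-plane properties (girth at least~6, regularity of degree exceeding $k$) are precisely what guarantee that $k$ cops can never dominate an entire fiber, so the robber always has a safe representative of his current $G$-vertex. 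Every $G$-move corresponds to exactly one $H$-edge between fibers, which is why capture time is preserved on the nose; and $\size{V(P)} = 2(q^2+q+1) < 4k^2$ for an appropriate prime power $q$, giving the size bound.

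Your sketch has a real gap beyond mere vagueness. You say a protected edge $uv$ becomes a ``short path through auxiliary vertices'' so that a cop traversing it ``arrives at an auxiliary vertex rather than the destination core,'' yet in the simulation you assert she ``arriv[es] at the target core in exactly the round in which the original cop would have moved in $G$.'' These cannot both hold. If the traversal lands on a non-core vertex in one step, you must explain why that vertex has all the same onward adjacencies as the core (else the next move fails) while still being unable to capture a robber sitting on the core---and in an undirected graph those two requirements pull against each other. If instead the traversal takes more than one step, capture time is not preserved. Mamino's product avoids this tension entirely: there is no distinguished core, all vertices of a fiber are interchangeable representatives, and the projective plane supplies just enough room that $k$ cops cannot corner the robber within a fiber. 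Without that combinatorial input or a genuine substitute, your exchange argument has nothing to stand on, and the claimed size bound of $4\cnum(G)^2$ per gadget is unsupported.
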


Next, we show that instead of restricting ourselves to reflexive undirected graphs, we can consider (not necessarily reflexive) directed graphs.  To be precise, we prove the following:

\newtheorem*{thm:directed2}{Theorem \ref{thm:directed}}
\begin{thm:directed2}
Fix $k \ge 2$, and let $G$ be a protected (not necessarily reflexive) directed graph with $\cnum(G) = k$ such that every vertex in $G$ has at least one in-neighbor and one out-neighbor.  If $H$ is constructed from $G$ as specified above, then $\cnum(H) = k$ and $\captdir(G) + 1 \le \capt(H) \le \captdir(G) + 2$.  In addition, $\size{V(H)} = (3k+3)\size{V(G)} + 8k + 3$.
\end{thm:directed2}

After establishing Theorem~\ref{thm:directed}, we are ready to prove Theorem~\ref{thm:main}.  We aim to construct reflexive undirected graphs with large capture time; Theorem~\ref{thm:directed} shows that it suffices to construct (not necessarily reflexive) protected directed graphs with large capture time.  In particular, for $k \ge 2$ we show how to construct arbitrarily large protected directed graphs $G$ with with $\cnumdir(G) = k$ and $\captdir(G) \ge \left(\frac{\size{V(G)}}{2k}\right)^{k+1}$.  Once we have constructed some such directed graph, we can use Theorem~\ref{thm:directed} to find a corresponding protected undirected graph.  Finally, Lemma~\ref{lem:protected} gives a corresponding ordinary undirected graph, which completes the proof of Theorem~\ref{thm:main}.

\section{Modeling Directed Graphs with Undirected Graphs}\label{sec:construction}

To prove Theorem~\ref{thm:directed} we must show how to construct, for a given protected directed graph $G$, a corresponding undirected protected graph $H$ with the same cop number, roughly the same capture time, and not ``too many more'' vertices.  Since this construction is rather involved, we devote this section to an explanation of the construction, leaving the actual proof of Theorem~\ref{thm:directed} for Section~\ref{sec:main}.

Let $G$ be a protected directed graph and let $k = \cnumdir(G)$.  We construct a reflexive protected undirected graph $H$ as follows. The vertex set of $H$ is comprised of sets $S$, $T_0$, $T_1$, $T_2$, $C_0$, $C_1, C_2, C^*, R_0, R_1, R_2$, and $R^*$.  We refer to vertices in $C_0$, $C_1$, and $C_2$ as {\em cop vertices}, while vertices in $R_0$, $R_1$, and $R_2$ are {\em robber vertices}.  $S \cup T_0 \cup T_1 \cup T_2$ is referred to as the {\em reset clique}, with $S$ itself comprising the {\em core} and $T_i$ comprising the {\em $i$th wing} of the clique.  Vertices in $C^*$ are {\em cop starter vertices}, while vertices in $R^*$ are {\em robber starter vertices}.  (See Figure~\ref{fig:directed_overview}.)  All vertices in the reset clique are protected, while all other vertices in $H$ are unprotected.  Throughout the construction, the indices on the $C_i$ and $R_i$ should be taken modulo 3 when necessary; for example, when $i=2$, the expression ``$C_{i+1}$'' refers to $C_0$.

\begin{figure}[hb]
\begin{center}
\includegraphics{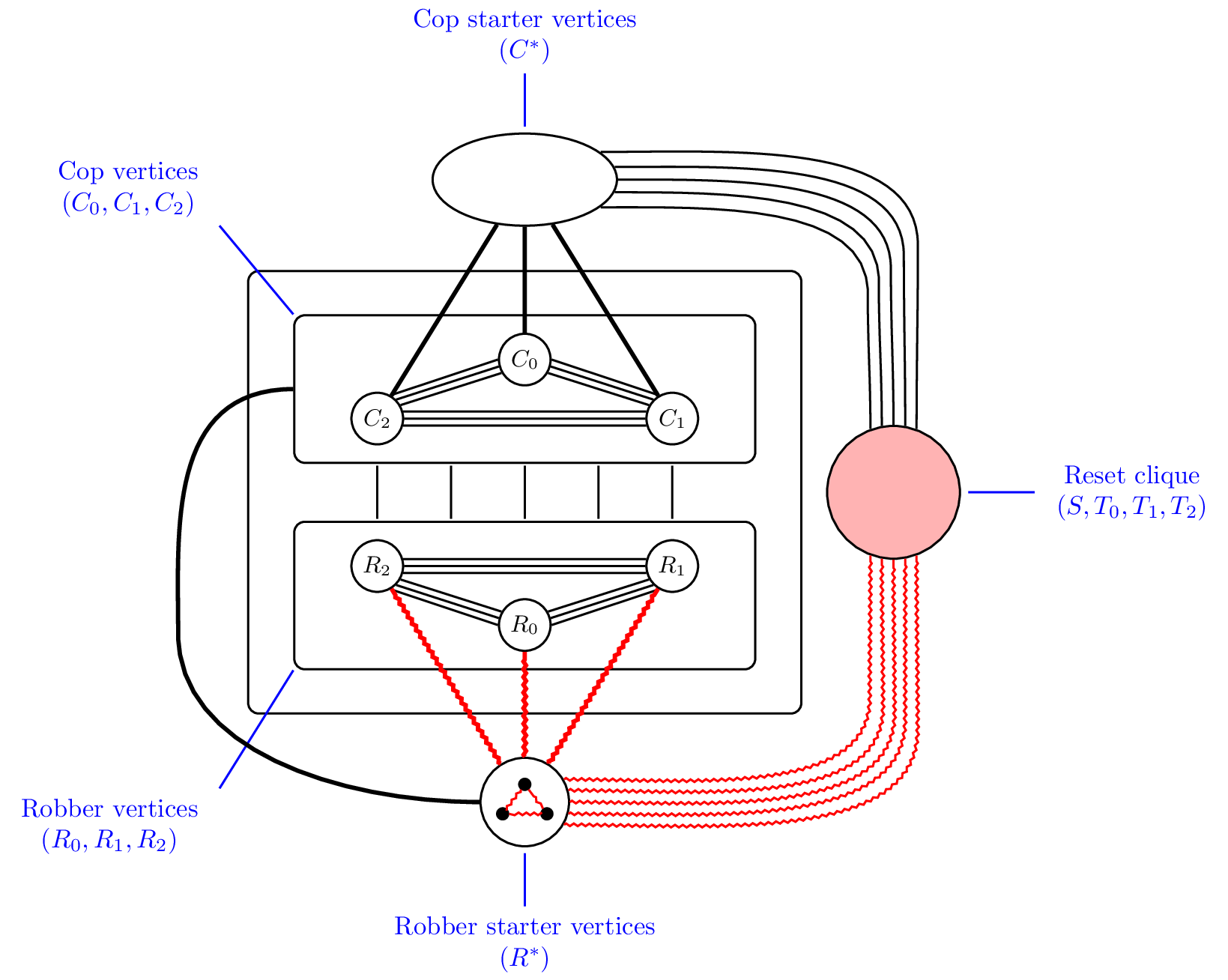}
\end{center}
\caption{: cop vertices and robber vertices.  Jagged edges are protected.  A thick edge indicates that all possible edges of this sort are present.}\label{fig:directed_overview}
\end{figure}

The core of the reset clique contains $4k$ vertices, namely $s_0, s_1, \dots, s_{4k-1}$.  In addition, for $i \in \{0,1,2\}$, the set $T_i$ contains $k$ vertices, namely $t_0^i, t_1^i, \dots, t_{k-1}^i$.  Every pair of vertices in the reset clique is joined by a protected edge.  Under the proper circumstances, the reset clique will permit the robber to ``reset'' the game back to its initial state.  As with the $C_i$ and $R_i$, indices on vertices within the $T_i$ should be taken modulo $k$ and indices on vertices in $S$ should be taken modulo $4k$.

The sets $C_0$, $C_1$, and $C_2$ each contain $k$ copies of every vertex in $G$.  For $v \in V(G)$, $i \in \{0, 1, 2\}$, and $j \in \{0, 1, \dots, k-1\}$, we denote by $\kappa(v;i,j)$ the $j$th copy of $v$ belonging to $C_i$.  Within each $C_i$, the copies of a given vertex form a clique; that is, for all $v, i, j,$ and $j'$, the vertices $\kappa(v;i,j)$ and $\kappa(v;i,j')$ are joined by an unprotected edge.  Aside from these edges and loops, there are no edges with both endpoints in any $C_i$.  The sets $R_0$, $R_1$, and $R_2$ each contain one copy of every vertex in $G$, and each $R_i$ is independent.  We denote by $\rho(v;i)$ the copy of $v$ in $R_i$.  Our intent is that for the bulk of the game, the cops occupy vertices within the $C_i$ (that is, ``cop vertices''), while the robber occupies a vertex within one of the $R_j$ (that is, a ``robber vertex'').

We seek to construct $H$ so that play of the game on $H$ mirrors play on $G$, in the sense that we can equate a cop occupying $\kappa(v;i,j)$ in $H$ with a cop occupying $v$ in $G$; likewise, the robber occupying $\rho(v;i)$ in $H$ is analogous to the robber occupying $v$ in $G$.  We also aim to greatly restrict the flexibility that both players enjoy.  In particular, our intent is that under any optimal cop strategy, if the cops are positioned within $C_i$ after some cop turn, then the robber must be positioned within $R_i$.  Moreover, we aim to force the cops to move from $C_0$ to $C_1$ to $C_2$ to $C_0$ and so on; forcing the cops to keep moving in the ``forward direction'' among the $C_i$ will allow us to simulate playing on the directed graph $G$.  Likewise, we aim to force the robber to move from $R_0$ to $R_1$ to $R_2$ to $R_0$ and so on.

We now add edges among the cop and robber vertices.  For all $\overrightarrow{uv} \in E(G)$, all $i \in \{0, 1, 2\}$, and all $j \in \{0, 1, \dots, k-1\}$, add unprotected edges joining $\kappa(u;i,j)$ to $\kappa(v;i+1,j)$ and joining $\rho(u;i)$ to $\rho(v;i+1)$.  (Note that $u$ and $v$ need not be distinct.)  These edges ensure that each ``forward'' movement by a cop or robber (from $C_i$ to $C_{i+1}$ or from $R_i$ to $R_{i+1}$) corresponds to following an edge from $G$ in the forward direction.  If $\overrightarrow{uv}$ is unprotected, then we also add an unprotected edge joining $\kappa(u;i,j)$ to $\rho(v;i+1)$.  These edges allow a cop to capture the robber in $H$ provided that she would be able to do so in $G$.  (See Figure~\ref{fig:crvertices_overview}.)

\begin{figure}[hb]
\begin{minipage}{0.49\textwidth}
\begin{center}
\includegraphics{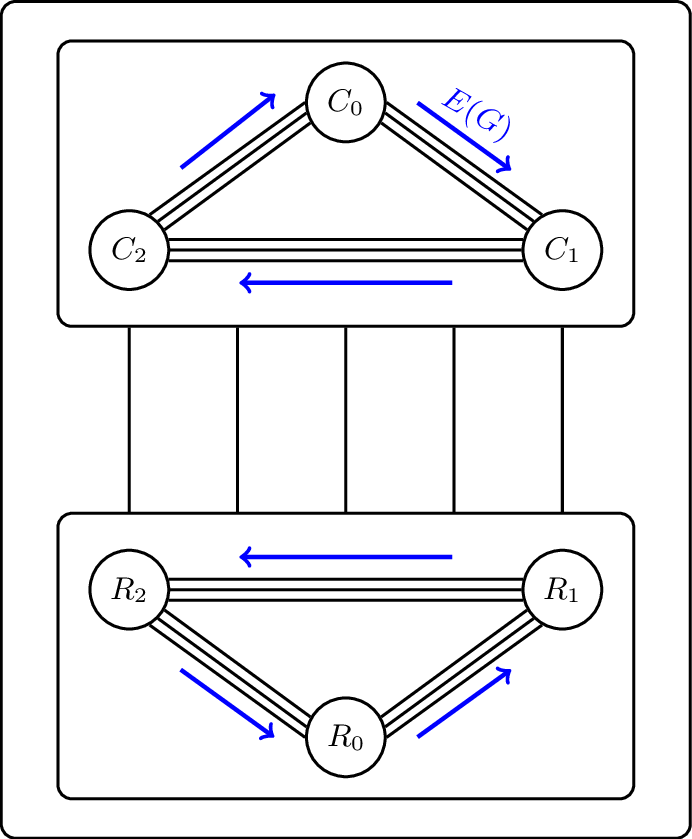}
\end{center}
\end{minipage}
\begin{minipage}{0.5\textwidth}
Each $R_i$ contains one copy of each vertex in $G$; each $C_i$ contains $k$ copies.

\medskip\medskip

For $\overrightarrow{uv} \in E(G)$, every copy of $u$ in $C_i$ is adjacent to every copy of $v$ in $C_{i+1}$, and the copy of $u$ in $R_i$ is adjacent to the copy of $v$ in $R_{i+1}$.

\medskip\medskip\medskip\medskip

Not pictured:
\medskip
\begin{itemize}
\item For $\overrightarrow{uv} \in E(G)$, each copy of $u$ in $C_i$ is adjacent to the copy of $v$ in $R_{i+1}$.
\medskip
\item Every vertex in $C_i$ is adjacent to all vertices in $R_{i-1} \cup R_i$.
\medskip
\item The $j$th copy of $v$ in $C_i$ is adjacent to $t_j^i$, $s_{4j+i}$, $s_{4j+i+1}$, $s_{4j+i+2}$, and $s_{4j+i+3}$. 
\medskip
\item The copy of $v$ in $R_i$ is adjacent to all of $S \cup T_i$ along protected edges.
\end{itemize}
\end{minipage}
\caption{: cop vertices and robber vertices.}\label{fig:crvertices_overview}
\end{figure}

Next, for $i \in \{0,1,2\}$, add protected edges joining each vertex of $R_i$ to each vertex of $S \cup T_i$.  These edges permit the robber to ``escape'' to the reset clique, should the cops fail to adequately defend it.  Additionally, add unprotected edges joining each $\kappa(v;i,j)$ to $s_{4j+i}$, $s_{4j+i+1}$, $s_{4j+i+2}$, $s_{4j+i+3}$, and $t_{j}^i$.  These edges permit cops in $C_i$ to defend the core and $i$th wing of the reset clique, but only by positioning themselves in very special ways (see below).  We also add unprotected edges joining all vertices of $C_i$ to all vertices of $R_i \cup R_{i-1}$.  These edges force the robber to keep moving ``forward'' from $R_0$ to $R_1$ to $R_2$ to $R_0$ and so forth, in order to stay one step ``ahead'' of the cops.

The set $C^*$ of cop starter vertices contains $k$ vertices, namely $c^*_0, c^*_1, \dots, c^*_{k-1}$.  For $j \in \{0, 1, \dots, k-1\}$, we add unprotected edges joining $c^*_j$ to $s_{4j+3},s_{4j+4},s_{4j+5},s_{4j+6}, t_j^0,t_j^1$, and $t_j^2$.  We also add unprotected edges joining every cop starter vertex to every cop vertex and every robber vertex.  Finally, add unprotected edges joining all robber starter vertices to all cop vertices and joining each $r^*_i$ to all vertices in $R_{i+1}$, along with protected edges joining each pair of robber starter vertices and protected edges joining each $r^*_i$ to all vertices in the core and $i$th wing of the reset clique.  These edges ensure that when the cops choose to start the game by occupying all of the cop starter vertices, the robber must start on one of the robber starter vertices.  (In fact, under optimal play the cops must start on the cop starter vertices, but this is less clear; see Section~\ref{sec:main}.)

\section{Main Results}\label{sec:main}

We are now ready to prove Theorems~\ref{thm:directed} and \ref{thm:main}.  To prove Theorem~\ref{thm:directed}, we must show that $\cnumprot(H) = \cnumdir(G) = k$ and that $\captprot(H)$ is roughly equal to $\captdir(G)$ (where $G$ is any given protected directed graph and $H$ is the protected undirected graph constructed from $G$ in Section~\ref{sec:construction}).  We begin by describing how we ``expect'' the game on $H$ to be played. 

To simplify the analysis, we introduce some additional terminology.  For all $v \in V(G)$ and $i \in \{0,1,2\}$, we refer to $\kappa(v;i,j)$ as a {\em $j$-vertex}; $c^*_j$ is also considered a $j$-vertex.  In the game on $H$, we say that the cops occupy a {\em stable position} if either the cops occupy all $k$ cop starter vertices, or all cops occupy vertices within $C_i$ for some $i$ with one cop on a $j$-vertex for all $j \in \{0, 1, \dots, k-1\}$.  We say that the game is in a {\em canonical configuration} if the cops occupy a stable position in some $C_i$ and either:
\begin{itemize}
\item It's the cops' turn and the robber occupies a vertex in $R_{i+1}$, or
\item It's the robber's turn and the robber occupies a vertex in $R_i$. 
\end{itemize}

As we will see below, the cops will want to prevent the robber from ever reaching the reset clique.  To do this, they must ensure that they always defend the robber's neighbors in the clique; this requires them to position themselves carefully.  Recall that we say a cop {\em defends} a vertex $v$ when there is an unprotected edge joining the cop's current vertex with $v$.

\begin{lemma}\label{lem:stable}
The cops defend all vertices of the core of the reset clique in $H$ if and only if they occupy a stable position.  Moreover, if the cops occupy a stable position, then they defend the $i$th wing of the reset clique if and only if they occupy either $C_i$ or the cop starter vertices.
\end{lemma}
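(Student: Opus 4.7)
The plan is to start from the observation that, by inspection of the construction, the only unprotected edges incident to vertices of the reset clique originate at cop vertices $\kappa(v;i,j)$ (each contributing four unprotected edges to $s_{4j+i}, s_{4j+i+1}, s_{4j+i+2}, s_{4j+i+3}$ and one to $t_j^i$) and at cop starter vertices $c^*_j$ (each contributing four unprotected edges to $s_{4j+3}, s_{4j+4}, s_{4j+5}, s_{4j+6}$ and three to $t_j^0, t_j^1, t_j^2$). Every other vertex of $H$ is joined to the reset clique only by protected edges or not at all, and the reset-clique loops are protected, so cops sitting outside $C_0 \cup C_1 \cup C_2 \cup C^*$ cannot defend any vertex of $S$ or of any $T_i$. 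For the easy direction of the first claim, a short direct calculation modulo $4k$ verifies that $\bigcup_{j=0}^{k-1}\{4j+i,4j+i+1,4j+i+2,4j+i+3\}$ exhausts $\{0,\dots,4k-1\}$ for every fixed $i$, and the analogous calculation handles the stable position on the cop starter vertices.

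For the converse in the first claim, suppose the cops defend all of $S$. Each individual cop defends at most four vertices of $S$, and $\size{S} = 4k$, so every cop must sit on a defending vertex type, defend exactly four core vertices, and the $k$ defended $4$-runs must partition $S$ into blocks of four cyclically consecutive indices. The cycle $\mathbb{Z}/4k\mathbb{Z}$ admits exactly four such partitions, each determined by the common residue modulo $4$ of its block-starting indices. A cop on $\kappa(v;i,j)$ contributes a block starting at $4j+i\equiv i\pmod{4}$, while a cop on $c^*_j$ contributes a block starting at $4j+3\equiv 3\pmod 4$. Matching the residues $\{0,1,2,3\}$ to the defender types forces either (i) all $k$ cops to lie in $C_i$ with one cop on a $j$-vertex for each $j\in\{0,\dots,k-1\}$ and some $i\in\{0,1,2\}$, or (ii) all $k$ cops to occupy $c^*_0,\dots,c^*_{k-1}$. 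Both are stable positions, completing the biconditional.

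For the second claim, fix any stable position. The only unprotected edges into $T_i$ originate at $\kappa(v;i,\cdot)$ vertices (with the $j$-vertex of $C_i$ adjacent to $t_j^i$ for each $v$) and at cop starter vertices (with $c^*_j$ adjacent to $t_j^i$). Thus the stable position in $C_i$ defends every $t_j^i$ via its $j$-vertex cop, the stable position on the cop starter vertices defends every $t_j^i$ via $c^*_j$, and a stable position in $C_{i'}$ with $i'\neq i$ provides no cop incident to any vertex of $T_i$ by an unprotected edge. The main obstacle I anticipate is the bookkeeping in the necessity direction of the first claim: one must verify that the only cyclic $4$-partitions of $\mathbb{Z}/4k\mathbb{Z}$ are the four residue-indexed ones and then match each residue cleanly to its defender type. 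This is conceptually straightforward, but indexing errors (especially around the mod $4k$ wrap-around and the distinction between $i \in \{0,1,2\}$ and the starter residue $3$) are the likely pitfall and merit careful tracking.
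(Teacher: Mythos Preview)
Your proof is correct and rests on the same counting observation as the paper's: each cop defends at most four core vertices, there are $4k$ of them, so the $k$ defended $4$-blocks must tile $\mathbb{Z}/4k\mathbb{Z}$ exactly. Where your execution of the converse differs is that you argue globally: cyclic $4$-tilings of $\mathbb{Z}/4k\mathbb{Z}$ are indexed by a single residue class modulo $4$, and matching residue $i\in\{0,1,2\}$ to $C_i$ and residue $3$ to the starter vertices forces the stable position. The paper instead argues locally via explicit shared neighbors: any two cops on $j$-vertices both defend $s_{4j+3}$ (forcing one cop per $j$-index), and an $\ell$-cop in $C_0$ together with an $(\ell{-}1)$-cop outside $C_0$ both defend $s_{4\ell}$ (forcing all cops into the same $C_i$, or by elimination all onto starters). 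Your residue-class framing is a bit cleaner and sidesteps the case-checking, while the paper's approach pins the obstruction to a concrete vertex; the second claim about the wings is handled the same way in both.
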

\begin{proof}
It is clear from the construction of $H$ that if the cops occupy a stable position, then they defend all vertices of the core.  Suppose now that the cops defend all vertices of the core.  Every cop vertex and cop starter vertex defends exactly four vertices within the core, while no other vertices in the graph defend any vertices of the core.  Since there are $4k$ vertices in the core and only $k$ cops, all $k$ cops must occupy cop vertices or cop starter vertices and, moreover, no two cops can have a common neighbor in the core.  By construction, if any two cops both occupy a $j$-vertex for some $j$, then they have a common neighbor (namely $s_{4j+3}$); consequently, we must have one cop on a $j$-vertex for all $j \in \{0, 1, \dots, k-1\}$.  Finally, by symmetry, it suffices to show that if one cop occupies a vertex of $C_0$, then so must the other $k-1$ cops.  Suppose otherwise, and choose $\ell$ so that the cop occupying an $\ell$-vertex sits in $C_0$, while the cop occupying an $(\ell-1)$-vertex does not.  By construction, these two cops both defend $s_{4\ell}$, so the cops cannot defend the entire core.  

The second half of the claim is clear.
\end{proof}

An important consequence of Lemma~\ref{lem:stable} is that when the robber resides in the reset clique, the cops can force him to leave only by occupying all $k$ cop starter vertices.  When they do so, the robber must move to one of the robber starter vertices, lest he be captured on the cops' next turn.  Thus motivated, we define an {\em initial configuration} to be a game state in which the cops occupy all $k$ cop starter vertices, the robber occupies a robber starter vertex, and it is the cops' turn.

\begin{lemma}\label{lem:copvx}
Suppose that the cops occupy a stable position and it is the robber's turn.  If the robber moves to any cop vertex, then the cops can capture him within the two subsequent rounds.
\end{lemma}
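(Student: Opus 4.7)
My plan is to treat the two possible stable positions separately. If the cops occupy all $k$ cop starter vertices, then capture takes just one round: since every cop starter is joined to every cop vertex by an unprotected edge, the cop at $c^*_j$ moves to $\kappa(v;m,j)$ along that edge on her next turn and captures.

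The substantive case is when the cops occupy $C_i$ in a stable position, with cop $j'$ on $\kappa(v_{j'};i,j')$ for each $j' \in \{0,1,\ldots,k-1\}$. Here I use two cop turns. On the first cop turn, the cop at $\kappa(v_j;i,j)$ moves to a cop vertex of the form $\kappa(u;m,j)$, chosen so that its unprotected reset-clique neighborhood is exactly the set $\{s_{4j+m}, s_{4j+m+1}, s_{4j+m+2}, s_{4j+m+3}, t_j^m\}$ of reset-clique vertices the robber can reach from $\kappa(v;m,j)$. Specifically, take $u = v_j$ when $m = i$ (the cop stays via her unprotected loop); take $u$ to be any out-neighbor of $v_j$ in $G$ when $m = i+1$; and take $u$ to be any in-neighbor of $v_j$ when $m = i-1$. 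Such a $u$ exists by the standing hypothesis of Theorem~\ref{thm:directed} that every vertex of $G$ has at least one in-neighbor and one out-neighbor, and the corresponding edge of $H$ is unprotected. (If the choice $u = v$ is available, the cop captures on this very turn, so assume $u \neq v$.) Simultaneously, any other cop -- one exists because $k \geq 2$ -- moves along an unprotected edge to the cop starter $c^*_j$.

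After these moves, I would verify that every vertex the robber could move to from $\kappa(v;m,j)$ is defended by some cop via an unprotected edge, so that capture is forced on the second cop turn. The cop at $c^*_j$ defends every cop vertex, every robber vertex, and all three wing vertices $t_j^0, t_j^1, t_j^2$; the cop at $\kappa(u;m,j)$ defends the four core vertices $s_{4j+m},\ldots,s_{4j+m+3}$, the wing $t_j^m$, every cop starter, and every robber starter (via the universal edges joining any cop vertex to every starter). Between the two cops every neighbor of $\kappa(v;m,j)$ in $H$ is defended, as is $\kappa(v;m,j)$ itself through its unprotected loop, accessed from $c^*_j$.

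The main obstacle is the requirement that the first cop land on a $\kappa(\cdot;m,j)$ vertex: these are precisely the cop vertices whose core-neighborhood offset $\{4j+m,\ldots,4j+m+3\}$ matches the robber's escape window into the reset clique. Reaching such a vertex in a single unprotected-edge step from $\kappa(v_j;i,j)$ -- despite $m$ possibly differing from $i$ -- is exactly where the in-/out-neighbor hypothesis on $G$ gets used.
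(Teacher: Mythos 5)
Your proposal is correct and follows essentially the same strategy as the paper's proof: one cop moves to a $j$-vertex in the robber's block (using the in-/out-neighbor hypothesis on $G$ to guarantee such a move exists in each of the three cases $m \in \{i-1,i,i+1\}$), another cop retreats to a cop starter vertex, and together they defend every vertex the robber can reach, including his current one. The only quibble is cosmetic: the robber's vertex $\kappa(v;m,j)$ is defended simply because the cop on the starter vertex is joined to it by an unprotected edge, not ``through its unprotected loop.''
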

\begin{proof}
Suppose the robber moves to a $j$-vertex within $C_i$.  In response, the cop currently on a $j$-vertex moves to any $j$-vertex in $C_i$, while any other cop moves to a cop starter vertex.  (Note that this is always possible because each vertex in $G$ has at least one in-neighbor and one out-neighbor, so no matter which cop vertex a cop occupies, she always has at least one $j$-vertex neighbor in each of $C_0$, $C_1$, and $C_2$.)  The cop on the cop starter vertex now defends all cop and robber vertices, while the cop on a $j$-vertex in $C_i$ defends all robber starter vertices and the robber's neighbors in the reset clique.  Thus, no matter how the robber moves, the cops can win on their next turn.
\end{proof}

\begin{lemma}\label{lem:initial}
Suppose that the game is in an initial configuration with the robber on $r^*_i$.  Under optimal play by both players, either the robber loses within the next three rounds, or:
\begin{description}
\item[(1)] After one more round, the game reaches a canonical configuration with the cops in $C_i$ and the robber in $R_{i+1}$, and
\item[(2)] For the remainder of the game, the robber never moves to an undefended vertex in the reset clique.
\end{description}
Moreover, if the game reaches a canonical configuration as in (1), then the robber may occupy whichever vertex of $R_{i+1}$ he chooses.
\end{lemma}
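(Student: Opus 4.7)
The plan is to case-split on the cops' first move from the initial configuration, to show that any move other than to a stable position in $C_i$ is dominated under optimal play, and then to analyze the robber's forced response and subsequent behavior.

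I classify the cops' first move by whether the resulting cop positions form: (i) a stable position in $C_i$, (ii) a stable position in $C_{i'}$ for some $i' \neq i$, or (iii) a non-stable position (including the possibility that all cops remain on the cop starter vertices). In cases (ii) and (iii), Lemma~\ref{lem:stable} guarantees that some vertex of $S \cup T_i$ is undefended; since $r^*_i$ is joined by a protected edge to every vertex of $S \cup T_i$, the robber safely enters the reset clique along such an undefended edge. Once in the reset clique, the cops must eventually re-occupy all $k$ cop starter vertices to force him out (again by Lemma~\ref{lem:stable}), after which the configuration returns to an initial one, possibly with a different index. Invoking the memoryless-optimal-strategy reasoning of Proposition~\ref{prop:main_upper}, no configuration may recur in an optimal play sequence, so the ``reset cycle'' strictly prolongs the game compared with moving directly to stable $C_i$; hence under optimal play the cops are in case (i), unless capture occurs within three rounds.

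Assuming the cops move to a stable position in $C_i$, I enumerate the robber's options from $r^*_i$. The unprotected neighbors of $r^*_i$ are the cop vertices and all of $R_{i+1}$; the protected neighbors are the other robber starter vertices together with $S \cup T_i$. All of $S \cup T_i$ is defended by Lemma~\ref{lem:stable}; every other robber starter vertex, along with $r^*_i$ itself via its unprotected loop, is defended because each occupied cop vertex is joined to every robber starter vertex by an unprotected edge. A move to any cop vertex triggers Lemma~\ref{lem:copvx} and yields capture within two further rounds, for three rounds total. The only remaining safe option is a move to an undefended vertex of $R_{i+1}$, giving a canonical configuration as in (1). The freedom claim follows because the cops' choice of stable position in $C_i$ corresponds exactly to their choice of initial placement in the game on $G$, and the robber's free choice of vertex in $R_{i+1}$ corresponds to his free choice of initial position on $G$, subject to the same defense constraints in both games.

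Finally, for property (2): in any canonical configuration with cops in $C_{i'}$ and robber in $R_{i'+1}$, the only reset-clique vertices adjacent to the robber lie in $S \cup T_{i'+1}$; of these, $S$ is defended (Lemma~\ref{lem:stable}) while $T_{i'+1}$ is not. However, if the robber moves to $t_j^{i'+1}$, then on the next cops' turn the cop occupying the $j$-vertex of $C_{i'}$ can step forward to $\kappa(w;i'+1,j)$ for any out-neighbor $w$ of its current $G$-vertex, and this cop vertex is joined to $t_j^{i'+1}$ by an unprotected edge; hence the robber is captured on the subsequent turn. Thus under optimal play he never moves to an undefended reset-clique vertex. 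The main obstacle I anticipate is the first case analysis: rigorously ruling out deviations (ii) and (iii) requires careful bookkeeping of how many rounds each reset consumes and of how the robber's forced relocation to a new $r^*_{m}$ interacts with the forbidden configuration recurrences. The freedom claim is also somewhat delicate, since cops in a stable $C_i$ position generically do defend some vertices of $R_{i+1}$ via their unprotected $G$-edges, so one must interpret ``whichever vertex he chooses'' through the faithful simulation of $G$ by $H$.
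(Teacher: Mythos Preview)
Your overall case split on the cops' first move and your enumeration of the robber's responses in case (i) match the paper's argument closely. However, there are two genuine gaps.

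First, the sub-case ``all cops remain on the cop starter vertices'' is mishandled. By the paper's definition this \emph{is} a stable position, and by Lemma~\ref{lem:stable} the cops then defend the core together with all three wings, so no vertex of $S \cup T_i$ is undefended. Your claim that Lemma~\ref{lem:stable} produces an undefended vertex in case~(iii) is therefore false for this sub-case. The paper disposes of it separately: if the cops stay put, the robber stays on $r^*_i$, and the cops have simply wasted a turn. (Relatedly, your configuration-recurrence argument needs the paper's symmetry observation: the second initial configuration may have a different index $m$, so it is not literally a repeated configuration, only one equivalent to it.)

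Second, and more seriously, your argument for claim~(2) is incorrect. You analyze a canonical configuration with cops in $C_{i'}$ and robber in $R_{i'+1}$, but that is the configuration on the \emph{cops'} turn, so the robber is not about to move; once the cops advance to a stable position in $C_{i'+1}$, the robber's reset-clique neighbors $S \cup T_{i'+1}$ are all defended. More importantly, your claimed capture after a move to $t_j^{i'+1}$ does not work: the cop moving to $\kappa(w;i'+1,j)$ merely becomes adjacent to $t_j^{i'+1}$, and on the robber's ensuing turn he can slide along a protected edge to a vertex of an undefended wing (or to an undefended core vertex if the other cops did not also move into $C_{i'+1}$). The robber does not get captured by entering the reset clique; he \emph{benefits}, because the game must then return to an initial configuration. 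That is precisely the paper's argument for~(2): entering the reset clique resets the game, which wastes rounds for the cops, so under optimal cop play the robber is never allowed an undefended reset-clique neighbor. Claim~(2) constrains the cops, not the robber.

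As a minor point, the freedom claim is simpler than you make it: the paper observes only that $r^*_i$ is adjacent to every vertex of $R_{i+1}$, so the robber may literally move to any of them (some such moves may lead to immediate capture, exactly mirroring a bad choice of initial position in $G$).
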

\begin{proof}
We begin with claim (2).  If the robber ever moves into the reset clique, then by Lemma~\ref{lem:stable} and the ensuing discussion, the game must eventually return to an initial configuration.  All initial configurations are equivalent up to symmetry, so the rounds leading up to this second initial configuration have served no purpose for the cops.  Thus, under optimal play by the cops, the game never returns to an initial configuration, so the robber must never reach the reset clique. 

We next turn to claim (1).  If the cops all remain on the cop starter vertices, then the robber can simply remain on $r^*_i$; this is clearly suboptimal for the cops.  Otherwise, if the cops do not move to a stable position in $C_i$, then by Lemma~\ref{lem:stable}, they leave some vertex $v$ of $S \cup T_i$ undefended.  Thus the robber can, on his ensuing turn, move to the reset clique; by claim (2), this cannot happen under optimal play.  Suppose therefore that the cops move to a stable position within $C_i$.  By Lemma~\ref{lem:copvx}, the robber cannot move to a cop vertex without being captured in short order.  The cops defend all of the robber's neighbors in the reset clique, so the robber cannot move there, either; likewise, he cannot remain in place or move to a different robber starter vertex.  The only remaining option is for the robber to move to $R_{i+1}$, resulting in a canonical configuration of the desired type; since $r^*_i$ is adjacent to all of $R_{i+1}$, the robber may occupy any vertex of $R_{i+1}$ he chooses.
\end{proof}

Recall that one of our main goals in constructing $H$ is to greatly restrict the freedom enjoyed by both players.  Claim (2) of the lemma above shows that the cops cannot allow the robber to safely enter the reset clique; this is the primary means by which we restrict the movements of the cops.  Conversely, the robber's movements are restricted by the threat of capture.  As we next show, these restrictions are severe enough that (under optimal play) the game is nearly always in a canonical configuration.

\begin{lemma}\label{lem:canonical}
Suppose that, on a cop turn, the game is in a canonical configuration with the cops in $C_{\ell}$ and the robber in $R_{\ell+1}$.  Under optimal play, either the robber loses within the next three rounds, or the game proceeds to a canonical configuration with the cops in $C_{\ell+1}$ and the robber in $R_{\ell+2}$.  
\end{lemma}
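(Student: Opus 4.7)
The proof splits naturally into analyzing the cops' move and then the robber's response.

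First, I would determine which cop moves are compatible with optimality. By Lemma \ref{lem:stable}, the cops currently (in $C_\ell$) defend $S$ and $T_\ell$ but neither of the other two wings, while the robber at some $\rho(u;\ell+1)$ is joined by protected edges to every vertex of $S \cup T_{\ell+1}$. Hence after the cop move the cops must defend all of $S \cup T_{\ell+1}$, since otherwise the robber moves into an undefended reset-clique vertex, which (by the same argument used in the proof of Lemma \ref{lem:initial}(2)) forces the game eventually to return to an initial configuration and thereby wastes rounds. Lemma \ref{lem:stable} then pins down the only configurations that defend $S \cup T_{\ell+1}$: a stable position in $C_{\ell+1}$, or the cop starter vertices $C^*$.

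Next, I would rule out the $C^*$ option. From the edge specification, $C^*$ defends every cop vertex, every robber vertex, and the whole reset clique (since each $c^*_j$ is adjacent to $s_{4j+3},\ldots,s_{4j+6}$ and to $t_j^0, t_j^1, t_j^2$, so that these neighborhoods jointly cover $S$ and all three wings), but $C^*$ is not adjacent via unprotected edges to any robber starter vertex. The robber's unique safe response is therefore to move to $r^*_\ell$, yielding an initial configuration. By Lemma \ref{lem:initial}, after one more round the game returns to a canonical configuration with the robber free to choose any vertex of $R_{\ell+1}$; since the robber plays optimally, this costs the cops at least two rounds with no compensating gain, so moving to $C^*$ is strictly suboptimal. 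The cops must therefore move to a stable position in $C_{\ell+1}$.

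Finally, I would analyze the robber's response. With the cops in a stable position in $C_{\ell+1}$, I walk through each neighbor of $\rho(u;\ell+1)$: the loop and every vertex of $R_{\ell+1}$ are defended by the unprotected edges joining $C_{\ell+1}$ to $R_{\ell+1}$; the reset-clique neighbors $S \cup T_{\ell+1}$ are defended by Lemma \ref{lem:stable}; $r^*_\ell$ is defended because robber starter vertices are joined via unprotected edges to every cop vertex; any cop vertex the robber can reach is handled by Lemma \ref{lem:copvx}; and $C^*$ is defended because cop starter vertices are joined via unprotected edges to every cop vertex. The robber's only potentially safe move is therefore to some $\rho(v;\ell+2)$ with $\overrightarrow{uv} \in E(G)$. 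Either some such vertex is undefended and the robber moves there, producing the desired canonical configuration with cops in $C_{\ell+1}$ and robber in $R_{\ell+2}$; or every such vertex is defended (via an unprotected $G$-edge $\overrightarrow{uv}$ together with a cop at $\kappa(u;\ell+1,j)$), in which case the robber has no safe move and is captured within the next two rounds.

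The main obstacle is the $C^*$ ruling: one must confirm that restarting the game buys the cops nothing, which depends on the symmetry of initial configurations via Lemma \ref{lem:initial}. The remaining work is a careful but essentially mechanical check against the edge specification of $H$.
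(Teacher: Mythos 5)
Your proof is correct and follows essentially the same route as the paper: force the cops into a stable position in $C_{\ell+1}$ via Lemma~\ref{lem:stable} and the reset-clique threat, then enumerate the robber's options using Lemma~\ref{lem:copvx}. You are in fact more careful than the paper in one respect: the paper's proof asserts that any cop move other than a stable position in $C_{\ell+1}$ leaves the reset clique exposed, passing silently over the possibility that the cops retreat to the cop starter vertices (which, by Lemma~\ref{lem:stable}, also defend the entire clique); your explicit argument that this merely recreates an initial configuration and wastes at least two rounds closes that case cleanly. The one slip is in your enumeration of the robber's neighbors: $\rho(u;\ell+1)$ also has neighbors in $R_{\ell}$, namely $\rho(w;\ell)$ for each in-neighbor $w$ of $u$ in $G$, since $H$ is undirected and those edges were added from $R_\ell$ to $R_{\ell+1}$; these vertices are defended because every vertex of $C_{\ell+1}$ is joined by unprotected edges to all of $R_{\ell}\cup R_{\ell+1}$, so the conclusion is unaffected, but the case should appear in the list.
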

\begin{proof}
If some cop currently defends the robber's vertex, then the cops win immediately; suppose otherwise.  Since the robber occupies a vertex in $R_{\ell+1}$, his current vertex is adjacent to all of $S \cup T_{\ell+1}$.  Thus, by Lemma~\ref{lem:stable}, unless the cops move to a stable position in $C_{\ell+1}$, the robber can safely move to a vertex in the reset clique.  By Lemma~\ref{lem:initial}, this cannot happen under optimal play. 

We may thus suppose that the cops move to a stable position in $C_{\ell+1}$.  The cops now defend all of $R_{\ell}$, all of $R_{\ell+1}$, the robber starter vertices, and all of the robber's neighbors in the reset clique.  Moreover, by Lemma~\ref{lem:copvx}, if the robber moves to a cop vertex, then he can be captured within the following two rounds.  The robber's only remaining option is to move into $R_{\ell+2}$, resulting in a canonical configuration of the desired form.
\end{proof}

We are finally ready to prove Theorem~\ref{thm:directed}.

\begin{theorem}
\label{thm:directed}
Fix $k \ge 2$, and let $G$ be a protected (not necessarily reflexive) directed graph with $\cnum(G) = k$ such that every vertex in $G$ has at least one in-neighbor and one out-neighbor.  If $H$ is constructed from $G$ as specified above, then $\cnum(H) = k$ and $\captdir(G) + 1 \le \capt(H) \le \captdir(G) + 2$.  In addition, $\size{V(H)} = (3k+3)\size{V(G)} + 8k + 3$.
\end{theorem}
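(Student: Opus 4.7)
The proof naturally decomposes into three parts. The vertex count follows by direct enumeration: the core $S$ contributes $4k$ vertices; the three wings $T_0, T_1, T_2$ together contribute $3k$; each $C_i$ contains $k\,\size{V(G)}$ vertices for a total of $3k\,\size{V(G)}$; each $R_i$ contains $\size{V(G)}$ vertices for a total of $3\,\size{V(G)}$; $C^*$ contributes $k$; and $R^*$ contributes $3$. Summing yields $(3k+3)\,\size{V(G)} + 8k + 3$. The bound $\cnum(H) \le k$ will follow immediately from the capture time upper bound, since a finite winning strategy for $k$ cops certifies $\cnum(H) \le k$. For $\cnum(H) \ge k$, I exhibit a winning robber strategy against $k-1$ cops: he occupies an undefended core vertex throughout. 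The argument from the proof of Lemma~\ref{lem:stable} shows that each cop defends at most four core vertices, so $k-1$ cops leave at least four of the $4k$ core vertices undefended at every turn; since the reset clique is a protected clique with protected loops, the robber can always move along a protected edge to such a vertex and cannot be captured while he remains there.

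For the capture time upper bound, the cops start on all of $C^*$. Because every cop vertex, every robber vertex, every robber starter vertex, and (by Lemma~\ref{lem:stable}) every reset clique vertex is defended from this configuration, the robber's only viable starting position is some $r^*_i$. On their first turn, the cops move to a stable position in $C_i$ with the $j$th cop at $\kappa(v_j^*; i, j)$, where $(v_0^*, \dots, v_{k-1}^*)$ is an optimal $G$-initial cop placement. By Lemma~\ref{lem:initial}, the robber must respond by entering $R_{i+1}$, and because $r^*_i$ is adjacent to all of $R_{i+1}$, he may choose any copy $\rho(v_R; i+1)$, corresponding to his choice of $G$-initial vertex. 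From this canonical configuration, the cops simulate the optimal $G$-strategy: a $G$-cop move from $u$ to $v$ is mirrored by the $H$-edge from $\kappa(u;\ell,j)$ to $\kappa(v;\ell+1,j)$, and a $G$-capture along an unprotected edge $\overrightarrow{uv}$ is mirrored by the unprotected $H$-edge from $\kappa(u;\ell,j)$ to $\rho(v;\ell+1)$. By Lemma~\ref{lem:canonical}, each round transitions between canonical configurations or yields a capture, so the $G$-simulation completes within $\captdir(G)$ additional rounds, giving $\capt(H) \le \captdir(G) + 2$ with slack for the one-round setup.

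For the lower bound $\capt(H) \ge \captdir(G) + 1$, I argue that the robber can force at least this many rounds against any cop strategy. If the cops do not start on all of $C^*$, the robber starts in the core of the reset clique and, by the cop number argument, can safely evade until the cops reposition all $k$ cops onto $C^*$; this only lengthens the game, so it suffices to treat the case where the cops start on $C^*$. The robber then picks $r^*_i$ and his $R_{i+1}$-vertex to match an optimal $G$-robber response, and by Lemma~\ref{lem:canonical} the ensuing simulation lasts at least $\captdir(G)$ further rounds, totalling $\captdir(G) + 1$. The principal obstacle is ensuring tightness of the correspondence between canonical configurations of $H$ and game states of $G$: one must verify that every unprotected edge from a cop vertex in $C_\ell$ to a robber vertex in $R_{\ell+1}$ arises only from an unprotected directed edge in $G$ (so captures in $H$ correspond to genuine $G$-captures), and that the reset clique threat genuinely forces the cops to cycle through $C_0, C_1, C_2$ in the forward direction (so that they cannot shortcut between canonical configurations). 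Both facts are encoded in the edge structure from Section~\ref{sec:construction} together with Lemmas~\ref{lem:initial} and~\ref{lem:canonical}.
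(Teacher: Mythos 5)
Your overall architecture matches the paper's proof: the vertex count by direct enumeration, the upper bound via cops starting on $C^*$ and simulating an optimal strategy for $G$ through the canonical-configuration machinery of Lemmas~\ref{lem:stable}, \ref{lem:copvx}, \ref{lem:initial}, and~\ref{lem:canonical}, and the lower bound via the robber simulating his own optimal strategy for $G$ after the game passes through an initial configuration. Your explicit argument that $k-1$ cops cannot win (each cop defends at most four of the $4k$ core vertices, so the robber can sit on undefended core vertices forever) is correct and is a worthwhile addition, since the paper leaves $\cnum(H) \ge k$ implicit.

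However, one step of your capture-time lower bound would fail as written. You claim that if the cops do not start on $C^*$, the robber can evade ``by the cop number argument'' until all $k$ cops assemble on $C^*$. That argument was tailored to $k-1$ cops and to the core alone; against $k$ cops it breaks down, because by Lemma~\ref{lem:stable} the cops defend the \emph{entire} core as soon as they occupy a stable position in any $C_i$ --- they need not be on $C^*$ for that --- and a robber who insists on staying in the core would then be captured. The repair uses the second half of Lemma~\ref{lem:stable}: a stable position in $C_i$ leaves the wings $T_{i+1}$ and $T_{i+2}$ undefended, while a non-stable position leaves some core vertex undefended; hence whenever the cops are not all on $C^*$, some vertex of the reset clique is undefended and reachable along a protected edge, and only from $C^*$ is the whole clique defended, at which point the robber exits to a robber starter vertex (this is how the paper phrases the robber's strategy). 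Two smaller points: in your upper bound, the list of vertices defended from $C^*$ includes ``every robber starter vertex,'' which contradicts your own (correct) conclusion that the robber must start on some $r^*_i$ --- the $r^*_i$ are exactly the undefended vertices in that configuration; and the accounting behind the $+2$ (the robber can deviate from canonical play only while the simulated game on $G$ has lasted at most $\captdir(G)-1$ rounds, plus one setup round in $H$, plus at most two further rounds to complete the capture) should be spelled out rather than dismissed as ``slack.''
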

\begin{proof}
It is clear from the construction of $H$ that $\size{V(H)} = (3k+3)\size{V(G)} + 8k + 3$.  To establish the rest of the claim, we show that the cops can win the game on $H$ within $\captdir(G)+2$ rounds and that the robber can evade capture on $H$ for at least $\captdir(G)$ turns.  As shown in Lemmas~\ref{lem:initial} and \ref{lem:canonical}, optimal play ensures that the game on $H$ will typically be in a canonical configuration.  We associate each canonical configuration in $H$ with a configuration of the game in $G$ in the natural way.  Each of the $k$ cops in $H$ occupies some cop vertex $\kappa(v;i,j)$; we view this cop as occupying the vertex $v$ in $G$.  Likewise, when the robber occupies some vertex $\rho(w;i)$ in $H$, we imagine that he occupies vertex $w$ in $G$.

We begin by giving a cop strategy to capture the robber on $H$ within $\captdir(G) + 2$ rounds.  Throughout the game on $H$, the cops will imagine playing a game on $G$ and use their strategy on $G$ to guide their play on $H$.  The cops begin by occupying all $k$ cop starter vertices.  To avoid immediate capture, the robber must then occupy one of the robber starter vertices, say (without loss of generality) $r^*_0$.  The cops now turn to the game on $G$ and choose their starting positions in that game.  For convenience, index the cops from 0 to $k-1$.  If cop $j$ occupies vertex $v$ in $G$, then she moves to vertex $\kappa(v;0,j)$ in $H$.  (This is always possible because each cop starter vertex is adjacent to all vertices in $C_0$.)  Thus, in $H$, the cops move to a stable position within $C_0$ that corresponds to their choice of initial positions on $G$.  As argued in Lemma~\ref{lem:initial}, if the robber is to survive for more than three more rounds, then he must occupy some vertex $\rho(w;1)$.  The cops now imagine that, in the game on $G$, the robber has chosen to occupy vertex $w$.

The game on $H$ has now entered a canonical configuration.  The cops imagine their next move (under optimal play) in $G$ and mirror this move in $H$, while simultaneously moving to a stable position within $C_1$.  In particular, if cop $j$ moves from $v$ to $w$ in $G$, then she moves from $\kappa(v;0,j)$ to $\kappa(w;1,j)$ in $H$.  (Note that since $\overrightarrow{vw} \in E(G)$, vertices $\kappa(v;0,j)$ and $\kappa(w;1,j)$ are adjacent in $H$.)  As argued in Lemma~\ref{lem:canonical}, either the robber moves to $R_2$ or he loses within the following two rounds.  Suppose the latter.  Since the game on $G$ has not yet ended, it has lasted at most $\captdir(G)-1$ rounds.  One round was played in the game on $H$ before the game on $G$ even began and as many as two more rounds might yet be played.  In total, the game on $H$ lasts at most $\captdir(G)+2$ rounds.  Now suppose instead that the robber moves from his current position $\rho(x;1)$ to some vertex $\rho(y;2)$ in $R_2$.  By construction, vertices $\rho(x;1)$ and $\rho(y;2)$ are adjacent in $H$ only if $\overrightarrow{xy} \in E(G)$.  Thus, in the game on $G$, the cops may imagine that the robber has moved from $x$ to $y$.  The game on $H$ remains in a canonical configuration and, moreover, this configuration corresponds to the configuration of the game on $G$.

The game on $H$ continues in this manner until either the robber fails to move to a canonical configuration as outlined in Lemma~\ref{lem:canonical} or until the cops capture the robber on $G$.  In the former case, as argued above, the game on $H$ lasts at most $\captdir(G)+2$ rounds.  In the latter case, some cop $j$ has followed an unprotected edge in $G$ from her vertex $v$ to the robber's vertex $w$ (where $v$ and $w$ are not necessarily distinct).  (Recall that in the game of Cops and Robbers with Protection, this is the only way that the game can end; in particular, unlike in ordinary Cops and Robbers, the game does not end if the robber moves to the cop's current vertex.)  In this case, in $H$, cop $j$ presently occupies $\kappa(v;i,j)$ for some $i$, while the robber occupies $\rho(w;i+1)$.  Since $\overrightarrow{vw}$ is an unprotected edge in $G$, these two vertices are joined in $H$ by an unprotected edge, so cop $j$ may proceed to capture the robber in $H$.  Since at most $\capt(G)$ rounds have elapsed in $G$ and one additional round was played in $H$ before the game in $G$ even began, in total at most $\capt(G)+1$ rounds have been played in $H$.

To show that $\capt(H) \ge \captdir(G)+1$, we use a similar argument, except that this time we give a strategy for the robber.  We assume throughout that the cops play optimally on $H$.  At the outset of the game on $H$, there are two possibilities: either the cops begin by occupying all $k$ cop starter vertices, or they don't.  In the latter case, by Lemma~\ref{lem:stable}, some vertex of the reset clique remains undefended; the robber chooses to begin there.  The robber can henceforth remain in the clique until the cops do occupy all $k$ cop starter vertices, at which point the robber moves to $r^*_0$.  If instead the cops begin by occupying the cop starter vertices, the robber simply begins on $r^*_0$; since this is clearly a more efficient line of play for the cops, we may suppose that this is what happens.

Thus, the game begins in an initial configuration with the robber on $r^*_0$.  By Lemma~\ref{lem:initial}, the cops must always defend all of the robber's neighbors in the reset clique.  They can do this only by moving to a stable position within $C_0$.  As above, we can associate this stable position with an initial position for the cops in the game on $G$.  The robber now considers the game on $G$ and chooses his initial position in that game; say he decides to begin on vertex $v$.  In the game on $H$, the robber moves to $\rho(v;1)$.  The game on $H$ has now entered a canonical configuration that corresponds to the current configuration of the game on $G$.  

It is now the cops' turn.  The cops all occupy vertices of the form $\kappa(u;0,j)$, while the robber occupies some vertex $\rho(v;1)$.  By construction, these two vertices are adjacent along an unprotected edge if and only if $\overrightarrow{uv}$ is an unprotected edge in $G$.  Thus, some cop can capture the robber on their ensuing turn on $H$ if and only if she can do so on $G$.  Otherwise, to prevent the robber from reaching the reset clique, the cops must move to a stable position in $C_1$.  As before, this cop movement in $H$ corresponds to a legal cop movement in $G$.  The robber imagines that the cops have played thus on $G$, decides which vertex to move to in that game, and moves (in $H$) to the corresponding vertex in $R_2$.  As above, the game continues in this manner, with each player's moves in $H$ corresponding to legal moves in $G$.  Eventually, the cops capture the robber in $H$.  By construction, the cops cannot capture the robber on $H$ until such time as they can also capture him on $G$.  Since the robber plays optimally in $G$, this takes at least $\captdir(G)$ rounds; since one additional round was played in $H$, we have $\capt(H) \ge \captdir(G)+1$, as claimed.
\end{proof}

Armed with Theorem~\ref{thm:directed}, we are ready to prove Theorem \ref{thm:main}.

\begin{theorem}\label{thm:main}
For fixed $k \ge 2$, the maximum capture time of an $n$-vertex graph with cop number $k$ is $\Theta(n^{k+1})$.
\end{theorem}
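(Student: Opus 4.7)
The upper bound $\capt(G) = O(n^{k+1})$ is immediate from Proposition~\ref{prop:main_upper}. For the lower bound, my plan is to chain together the two reductions already established in the excerpt. Specifically, I would first construct, for each $k \ge 2$ and arbitrarily large $m$, a protected directed graph $G$ with $|V(G)| \le 2km$, $\cnumdir(G) = k$, and $\captdir(G) \ge m^{k+1}$ in which every vertex has at least one in-neighbor and one out-neighbor. Applying Theorem~\ref{thm:directed} then produces a protected undirected graph $H$ with $\cnum(H) = k$, $\capt(H) \ge \captdir(G) + 1$, and $|V(H)| = (3k+3)|V(G)| + 8k + 3$. Applying Lemma~\ref{lem:protected} in turn produces an ordinary undirected graph $H'$ with $\cnum(H') = k$, $\capt(H') = \capt(H)$, and $|V(H')| < 4k^2 |V(H)|$. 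A routine arithmetic check then yields $\capt(H') \ge \left(|V(H')|/(40 k^4)\right)^{k+1} = \Omega(n^{k+1})$, which is the desired bound.

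All the real work thus lies in constructing $G$. The plan is to take $k+1$ vertex-disjoint directed cycles $D_0, D_1, \ldots, D_k$, each of length roughly $m$, and to link them with a small amount of gadgetry (using directed and protected edges) so that the global game behaves like a $(k+1)$-digit odometer. The intent is that in any ``canonical'' configuration, each of the $k$ cops occupies a distinct cycle $D_j$ with $0 \le j \le k-1$, while the robber occupies a vertex of $D_k$. Each time the robber completes a full revolution around $D_k$, the gadgetry should force exactly one cop -- the one on $D_{k-1}$ -- to advance by a single step; each time that cop completes a revolution around $D_{k-1}$, the cop on $D_{k-2}$ advances by a step; and so on up to $D_0$. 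Capture occurs only when all $k+1$ coordinates align simultaneously, yielding a state space of size $\Theta(m^{k+1})$ that the robber can force the game to traverse.

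Two things must be verified for this $G$: that $k$ cops suffice to win, and that $k-1$ cops do not suffice while any $k$-cop strategy requires $\Omega(m^{k+1})$ rounds. For $\cnumdir(G) \le k$, I would describe an explicit strategy in which each cop patrols her designated cycle, using protected edges to stay safe while the other cops realign, and then show that this strategy eventually aligns all coordinates and achieves capture. For $\cnumdir(G) \ge k$, I would adapt the classical Aigner--Fromme retract argument to the protected directed setting, exhibiting a robber who perpetually escapes into whichever cycle contains no cop. For the capture-time lower bound, I would define a potential function measuring the cops' combined progress toward the target alignment and show that, under optimal robber play, it can advance by at most $O(1)$ per round against any cop strategy. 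The main obstacle is the gadget design itself: the directed and protected edges must be chosen carefully enough that no cop can ``short-circuit'' the odometer by making multiple coordinates advance in a single round, and simultaneously so that the robber is never forced into a configuration where several coordinates can be pinned down at once; balancing these two constraints while keeping $|V(G)|$ linear in $km$ will be the most technical part of the argument.
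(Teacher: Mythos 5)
Your reduction skeleton coincides exactly with the paper's: the upper bound is Proposition~\ref{prop:main_upper}, and the chain ``protected directed $G$ $\to$ Theorem~\ref{thm:directed} $\to$ Lemma~\ref{lem:protected}'' together with the $\left(\size{V(H')}/(40k^4)\right)^{k+1}$ arithmetic is precisely how the paper finishes. The gap is that the entire substance of the lower bound --- an explicit protected directed graph $G$ with $\cnumdir(G)=k$, every vertex having an in- and out-neighbor, and $\captdir(G)=\Omega(m^{k+1})$ --- is left as a plan, and the plan as stated faces an obstacle you name but do not resolve. In your odometer, the cops control $k$ of the $k+1$ digits. If a cop on a high-order cycle is able to hold her position (or otherwise move freely) while the lower digits tick, nothing in the sketch prevents the cops from simply aligning their own $k$ digits within $O(m)$ rounds and waiting for the robber's digit to come around, which collapses the capture time to $O(m)$. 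So the gadgetry must permit a cop to advance \emph{only} when triggered by a revolution of the next-lower digit, yet still leave the graph winnable by $k$ cops --- and then you separately owe $\cnumdir(G)\ge k$, for which an Aigner--Fromme-style retract argument is not obviously available in the protected directed setting. None of this is carried out, and the proposed potential-function argument presupposes exactly the carry discipline whose enforcement is the open point.

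The paper avoids the carry mechanism altogether. Its $G$ has a protected reset clique of $k$ vertices, $k$ escape vertices, $k$ directed, loopless cop cycles of lengths $q_0,\dots,q_{k-1}$, and a robber cycle of length $p$, where $p,q_0,\dots,q_{k-1}$ are $k+1$ distinct primes each of size roughly $n/(2k)$. The reset clique can be defended only by placing one cop on the start vertex of each cop cycle, and the escape vertices are defended only when each cop cycle contains a cop; hence on every robber turn the cops must keep one cop per cycle, and since the cycle vertices are loopless each cop is forced to advance exactly one step per round, as is the robber on his cycle (any lapse lets the robber flee to the reset clique for good). Capture is possible only when all $k+1$ phases align at designated vertices, and since the cycle lengths are pairwise coprime, the Chinese Remainder Theorem forces the first alignment after at least $pq_0\cdots q_{k-1}-1=\Omega\!\left((n/2k)^{k+1}\right)$ rounds. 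This lockstep design makes $\cnumdir(G)\ge k$ immediate (all $k$ cops are needed just to defend the reset clique), makes $\cnumdir(G)\le k$ an easy explicit strategy starting from the vertex $\omega$, and renders any potential-function analysis unnecessary. To complete your proof you would either need to supply and analyze a working carry gadget --- which is a genuinely harder forcing problem than the one the paper solves --- or replace the odometer by a coprime-cycle construction of this kind.
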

\begin{proof}
It follows from Proposition \ref{prop:main_upper} that the capture time of an $n$-vertex graph with cop number $k$ is $O(n^{k+1})$, so it suffices to establish a matching lower bound.  In particular, we will show that there exist arbitrarily large graphs $H$ with cop number $k$ and capture time at least $\left( \frac{\size{V(H)}}{40k^4}\right )^{k+1}$.

We first show how to construct a protected directed graph $G$ with $\cnumdir(G) = k$ and $\captdir(G) \ge \left(\frac{n}{2k}\right)^{k+1}$, where $n = \size{V(G)}$.  By Theorem~\ref{thm:directed}, it then follows that there exists a protected reflexive undirected graph $G'$ with $\cnumprot(G') = k$, $\captprot(G') \ge \left(\frac{n}{2k}\right)^{k+1},$ and $\size{V(G')} = (3k+3)n+8k+3$.  Finally, Lemma~\ref{lem:protected} implies the existence of a reflexive undirected graph $H$ with $\cnum(H) = k$, $\capt(H) \ge \left(\frac{n}{2k}\right)^{k+1}$, and $\size{V(H)} < 4k^2 \size{V(G')} < 20k^3n$ (for sufficiently large $n$).  Thus, $\capt(H) \ge \left( \frac{\size{V(H)}}{40k^4}\right )^{k+1}$, as claimed.

Our goal in constructing $G$ is to restrict the cops' actions so greatly that they have only one reasonable line of play -- a line of play that happens to take a long time to resolve.  This will greatly simplify the analysis of the game. 

The vertex set of $G$ consists of five parts: $S$, the {\em reset clique}; $C_0, C_1, \dots, C_{k-1}$, the {\em cop tracks}; $R$, the {\em robber track}; $X$, the set of {\em escape vertices}; and one special vertex $\omega$.  The reset clique consists of the $k$ vertices $s_0, s_1, \dots, s_{k-1}$.  Each cop track $C_i$ consists of the $q_i$ vertices $c_{i,0}, c_{i,1}, \dots, c_{i,q_i-1}$, where the $q_i$ will be specified later.  Likewise, the robber track consists of the $p-1$ vertices $r_0, \dots, r_{p-2}$ (where $p$ will be specified later) along with the $k$ vertices $r_{p-1}^0, r_{p-1}^1, \dots, r_{p-1}^{k-1}$.  The set $X$ consists of the $k$ escape vertices $x_0, x_1, \dots, x_{k-1}$.  The vertices in the reset clique are reflexive and protected, $\omega$ is reflexive and unprotected, and all other vertices are irreflexive.

Between each pair of vertices $s,t$ in the reset clique, we add edges $\vec{st}$ and $\vec{ts}$, both protected.  For each $i \in \{0, 1, \dots, k-1\}$, we add an unprotected edge from $c_{i,0}$ to $s_i$.  On the cop tracks, we add unprotected edges from $c_{i,j}$ to $c_{i,j+1}$ for all $j$ (where $j$ is taken modulo $q_i$ where appropriate).  On the robber track, we add unprotected edges from $r_j$ to $r_{j+1}$ for $j \in \{0, 1, \dots, p-3\}$, along with unprotected edges from $r_{p-2}$ to $r_{p-1}^0, r_{p-1}^1, \dots, r_{p-1}^{k-1}$ and from each of $r_{p-1}^0, r_{p-1}^1, \dots, r_{p-1}^{k-1}$ to $r_0$.  We also add unprotected edges from every vertex on the robber track to every escape vertex, from every escape vertex to every vertex in the reset clique, and from every vertex in the reset clique to $r_0$.  Finally, we add unprotected edges from every vertex in $C_i$ to the escape vertex $x_i$, from each $c_{i,q_i-1}$ to $r_{p-1}^i$, and from $\omega$ to all vertices {\em except} those in the reset clique.  (Refer to Figure~\ref{fig:construction}.)

\begin{figure}[hb]
\begin{center}
\includegraphics{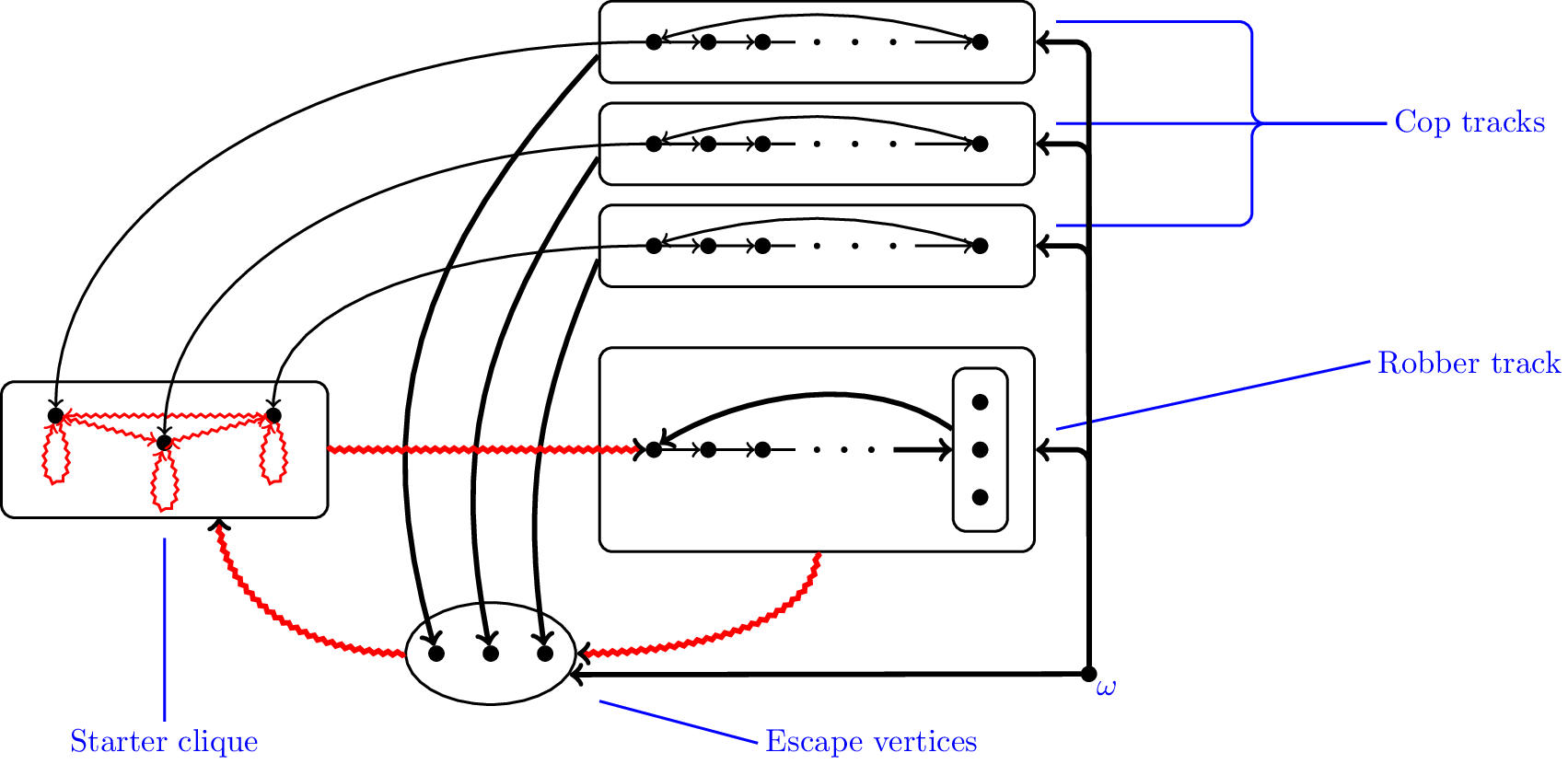}
\end{center}
\caption{: the graph $G$ (with $k=3$).  Jagged edges are protected.  A thick edge from one set of vertices to another indicates that all possible edges of this sort are present.}\label{fig:construction}
\end{figure}

Before proceeding, we make four observations.  First: the cops can defend the reset clique only by occupying all of $c_{0,0}, c_{1,0}, \dots, c_{k-1,0}$.  Second: a cop can enter $\omega$ only by beginning the game there; once he leaves, he can never return.  Third: the cops defend all $k$ escape vertices if and only if either some cop occupies $\omega$, or each cop track contains a cop.  When each cop track contains a cop, we say that the cops occupy a {\em stable position}.  Fourth: from a stable position, if the cop on $C_i$ ever leaves $C_i$, then he can never return; consequently, the cops can never again occupy all of $c_{0,0}, c_{1,0}, \dots, c_{k-1,0}$, so they can never again defend the reset clique.  Thus if, on the robber's turn, the cops ever fail to occupy a stable position (and no cop occupies $\omega$), then the robber can move to some undefended escape vertex and subsequently to the reset clique, whence he can never be captured.  Thus the cops must occupy a stable position on every robber turn or else lose the game.

At this point, we remark that because $k$ cops are needed to defend the reset clique, we have $\cnumdir(G) \ge k$.  We explain later why $\cnum(G) \le k$.
 
We are now ready to outline the robber's strategy for surviving ``long enough'' against $k$ cops.  At the beginning of the game, if the cops' initial placement leaves any vertex of the reset clique undefended, then the robber begins on some such vertex; he moves between undefended vertices in the clique until the cops occupy all of $c_{0,0}, c_{1,0},\dots, c_{k-1,0}$, at which point he moves to $r_0$.  Otherwise, the cops must have begun the game on precisely these vertices, so the robber begins on $r_0$.  In either case, the game reaches a configuration in which the cops occupy $c_{0,0}, c_{1,0}, \dots, c_{k-1,0}$, the robber occupies $r_0$, and it is the cops' turn; we refer to this as the {\em initial configuration} of the game.  In an initial configuration, the cops occupy a stable position.  If, on any robber turn, the cops {\em do not} occupy a stable position, then (as explained above) the robber moves to an undefended escape vertex, moves from there to the reset clique, and forever after remains safely in the reset clique.  Thus, so long as the robber remains on the robber track, the cops must always maintain a stable position (unless, of course, some cop can capture the robber with her next move).  Conversely, so long as the cops always occupy a stable position, they defend all $k$ escape vertices, so the robber cannot leave the robber track.  

In a stable position, we have one cop in $C_i$ for each $i$ -- that is, we have one cop in each cop track.  To maintain a stable position, each cop must remain within her track, and consequently must move ``forward'' on each turn.  That is, the cop who begins on $c_{i,0}$ must first move to $c_{i,1}$, then to $c_{i,2}$, and so forth.  Likewise, since the cops always occupy a stable position, the robber must move from $r_0$ to $r_1$, then to $r_2$, and so forth.  Once cop $i$ reaches $c_{i,q_i-1}$, there are two reasonable possibilities for her next move.  If the robber occupies $r_{p-1}^i$ at that time, then cop $i$ may (and surely will) capture him.  If instead the cops cannot capture the robber, then to maintain a stable position, cop $i$ must return to $c_{i,0}$.  Similarly, once the robber reaches $r_{p-2}$, he has some flexibility.  Assuming the cops occupy a stable position, the robber cannot leave the robber track.  Thus, if any vertex $r_{p-1}^i$ is undefended, the robber moves there, and the game continues as before -- with the robber moving next to $r_0$, then to $r_1$, and so forth.  If instead all of the $r_{p-1}^i$ are defended, then the robber moves to $r_{p-1}^0$, where he will be captured.  

This process can end only with the robber's capture.  This occurs if, and only if, on some robber turn, cop $i$ occupies vertex $c_{i,q_i-1}$ for all $i$, while the robber occupies $r_{p-2}$.  We refer to this as a {\em terminal configuration}.  Suppose that the game first reaches a terminal configuration in the $T$th round after reaching the initial configuration.  Since each cop must walk along her track over and over, never leaving and never pausing, we see that $T$ must be congruent to $-1$ modulo $q_i$ for all $i$.  Likewise, $T$ must be congruent to $-1$ modulo $p$.  We now choose $p, q_0, \dots, q_{k-1}$.  Fix an arbitrary positive integer $r$.  Let $p$ be the $r$th smallest prime number, $q_0$ the next-smallest, $q_1$ the next-smallest after that, and so forth.  Since $p$ and the $q_i$ are all prime, $T$ must be congruent to $-1$ modulo $pq_0q_1\dots q_{k-1}$, hence $T \ge pq_0q_1\dots q_{k-1}-1$.  

At this point, we note that $\cnumdir(G) \le k$.  If the cops all start on vertex $\omega$, then the robber must start in the reset clique.  The cops can now easily force the game into an initial configuration.  Henceforth, if the cops continue to follow the cop tracks, then the robber can never leave the robber track, so the game reaches a terminal configuration after $T$ rounds -- at which point the cops win.

Returning to the matter of $\captdir(G)$, recall that the $r$th-smallest prime number lies between $r(\log r + \log \log r - 1)$ and $r(\log r + \log \log r)$.  Thus,
$$\captdir(G) \ge T \ge [r(\log r + \log \log r - 1)]^{k+1}.$$
In addition,
\begin{align*}
n = \size{V(G)} &= \size{S} + \sum_{i=0}^{k-1}\size{C_i} + \size{R} + \size{X} + 1\\
                &\le k + k(r+k)(\log (r+k) + \log \log (r+k)) + r(\log r + \log \log r) + k + 1\\
								&\le 2kr(\log r + \log \log r - 1),
\end{align*}
for $r$ sufficiently large relative to $k$.  Thus,
$$\captdir(G) \ge [r(\log r + \log \log r - 1)]^{k+1} \ge \left (\frac{n}{2k}\right )^{k+1},$$
as claimed.
\end{proof}

\section{Directed Graphs}\label{sec:directed}

While our primary goal in this paper was to construct undirected graphs with large capture time, the tools we have established enable us to say a few things about directed graphs.  Most notably, Theorem~\ref{thm:directed} shows that the Cops and Robbers played on directed graphs is very closely connected to the game played on undirected graphs.  This is significant because Cops and Robbers on directed graphs is not well understood; very little work has been done in the area.  It is our hope that the techniques used in Theorem~\ref{thm:directed}, if not the theorem itself, can be used to establish new results on Cops and Robbers in the directed setting.

What work has been done on this topic -- most notably in \cite{FKL12}, \cite{GR95}, and \cite{LO17} -- has focused on strongly-connected directed graphs.  Theorem~\ref{thm:main} implies that for $k \ge 2$, there exist $n$-vertex strongly-connected directed graphs with cop number $k$ and capture time $\Theta(n^{k+1})$: simply construct an undirected graph with these properties and replace each undirected edge $uv$ with the two edges $\overrightarrow{uv}$ and $\overrightarrow{vu}$.  Moreover, the argument used to prove Proposition~\ref{prop:main_upper} can be applied to directed graphs, giving an $O(n^{n+1})$ bound on the capture time.

However, one thing is not immediately clear: how large can the capture time be for an $n$-vertex strongly-connected directed graph with cop number 1?  One cannot simply apply the argument that Bonato et al.~\cite{BGHK09} used to show that an undirected graph with cop number 1 has capture time $O(n)$; it does not extend to this more general setting.  There is good reason for this: in fact there exist $n$-vertex strongly-connected directed graphs with cop number 1 and capture time $\Omega(n^2)$, as we next show.

While Theorem~\ref{thm:main} requires $k \ge 2$, this is only needed to satisfy the hypotheses of Theorem~\ref{thm:directed}: when $k=1$, the main construction does in fact create a directed graph with cop number 1 and capture time $\Theta(n^2)$.  (Note that this shows that the bound on $k$ in Theorem~\ref{thm:main} is best possible: one cannot hope to adjust the construction so that it works when $k=1$.)  It is not hard to adjust this construction so that the graph produced is also reflexive and strongly-connected.  Our construction uses protected directed graphs; we remark that Lemma~\ref{lem:protected} can be extended to the directed graph setting by making the natural adjustments to the proof (\cite{Mam13}, Lemma 3.1).\footnote{
In particular:
\begin{itemize}
\item Take $P$ to be a doubly-directed incidence graph of a projective plane;
\item Add an edge from $(i,p)$ to $(j,q)$ if and only if $\overrightarrow{pq} \in E(P)$ or $i=j$;
\item $\overrightarrow{vw} \in E(G')$ when there is an unprotected edge from $\pi(v)$ to $\pi(w)$ in $G$ or when $\overrightarrow{vw} \in E(H)$ and either $\pi(v) = \pi(w)$ or there is a protected edge from $\pi(v)$ to $\pi(w)$ in $G$.
\end{itemize}}

\begin{theorem}\label{thm:directed_capture}
The maximum capture time among $n$-vertex strongly-connected directed graphs with cop number 1 is $\Theta(n^2)$.
\end{theorem}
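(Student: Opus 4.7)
The plan is to establish the upper and lower bounds separately, relying on the construction already developed for Theorem~\ref{thm:main}.

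\textbf{Upper bound.} The argument of Proposition~\ref{prop:main_upper} adapts verbatim to directed graphs: under optimal play no two cop turns can begin with the same configuration, so with a single cop there are at most $n^2$ distinct rounds. Hence the capture time of an $n$-vertex strongly-connected directed graph with cop number $1$ is $O(n^2)$.

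\textbf{Lower bound.} I would re-run the construction from the proof of Theorem~\ref{thm:main}, but instantiated with $k = 1$. Nothing in that construction itself (cop track, robber track, reset clique, escape vertex, and the special vertex $\omega$) required $k \ge 2$; the hypothesis $k \ge 2$ was imposed only so that Theorem~\ref{thm:directed} could subsequently be invoked to transfer the lower bound from the directed setting to the undirected setting. Since the target theorem is a statement about directed graphs, that restriction is not needed here. Taking $p$ and $q_0$ to be two distinct primes of order $\Theta(n)$ produces a protected directed graph $G_0$ with $\cnumdir(G_0) = 1$, $\size{V(G_0)} = O(n)$, and $\captdir(G_0) \ge pq_0 - 1 = \Omega(n^2)$, by exactly the analysis already carried out.

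Two adjustments remain before invoking the directed analog of Lemma~\ref{lem:protected}. First, to make the graph reflexive, I would add an unprotected loop at every currently irreflexive vertex. This merely grants each player the option to stand still; since the capture-time analysis in the proof of Theorem~\ref{thm:main} already forces the cop to advance along her track every round, the option to stall can only help the robber, and hence cannot shorten the game. Second, to make the graph strongly connected, I would add a small number of back-edges (for instance, a directed edge from $s_0$ to $\omega$, plus whatever few extra edges are needed to form a directed cycle through all parts of the graph). The key verification is that these new edges neither give the robber an indefinite escape nor let the cop bypass the long cop track; because the new edges are few and can be placed well away from the combinatorial bottleneck that forces play around the cop and robber tracks, the original strategy analysis carries through almost unchanged. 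Finally, applying the directed version of Lemma~\ref{lem:protected} (as outlined in the footnote, with the doubly-directed incidence graph of a projective plane replacing the undirected one) yields an unprotected reflexive strongly-connected directed graph $H$ with $\cnum(H) = 1$, $\capt(H) = \Omega(n^2)$, and $\size{V(H)} = O(\size{V(G_0)}) = O(n)$.

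\textbf{Main obstacle.} The chief difficulty will be confirming that the auxiliary edges added for strong connectivity are genuinely ``harmless'' with respect to the strategy analysis: one must check that after any such edge is used, the cop can still restore an initial configuration, and that no added edge creates a shortcut enabling the cop to capture the robber faster than the $pq_0$-long cycle dictates. This mirrors the case analysis performed in the proof of Theorem~\ref{thm:main} and should be routine but tedious; the rest of the proof reduces to bookkeeping with already-established results.
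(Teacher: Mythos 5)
Your upper bound and your overall plan (run the Theorem~\ref{thm:main} construction with $k=1$, then apply the directed version of Lemma~\ref{lem:protected}) match the paper, but your treatment of reflexivity contains a genuine gap that breaks the construction. You argue that adding a loop at every vertex ``merely grants each player the option to stand still'' and ``can only help the robber, and hence cannot shorten the game.'' That is exactly the problem: it helps the robber so much that the resulting graph no longer has cop number $1$, so it is not a valid witness for the theorem. The entire lower-bound analysis of Theorem~\ref{thm:main} rests on both players being \emph{forced} to advance one step along their tracks every round, so that capture occurs precisely when the round number $T$ satisfies $T \equiv -1$ simultaneously modulo $p$ and modulo $q_0$. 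Once the robber may pass, he simply refuses to synchronize -- e.g.\ he stalls one round whenever the cop nears $c_{q_0-1}$ -- and since no vertex of the cop track other than $c_{q_0-1}$ defends any vertex of the robber track, the cop can never force a capture. The paper repairs this with a gadget you are missing: it duplicates the cop track and the robber track (twins $c'_i$ and $r'_i$ with the same in- and out-neighbors) and adds unprotected edges from every $c_i$ to every $r_j$ and from every $c'_i$ to every $r'_j$. A cop on an unprimed cop vertex then defends the entire unprimed robber track, so by alternating between the primed and unprimed copies as she advances, she forces the robber to hop to the opposite copy, one step forward, every round. Without some such mechanism your $G_0$ has cop number greater than $1$ after the loops are added.

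The strong-connectivity step is a smaller but still real issue. You defer it as ``routine but tedious,'' but the specific placement of the back-edges matters: the paper deliberately routes the cop's return to $\omega$ through an intermediate vertex $\psi$ (edges from the cop track to $\psi$, and $\psi \to \omega \to \psi$) precisely so that returning to $\omega$ costs two moves, during which the cop abandons her stable position and the robber escapes to the reset clique. A one-step back-edge to $\omega$ from the cop track would let the cop re-enter $\omega$, from which she defends every vertex outside the reset clique, and this must be shown not to create a faster winning line. So the verification you label as the ``main obstacle'' is not merely bookkeeping; it dictates the design of the gadget rather than being checkable after the fact.
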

\begin{proof}
As noted above, the argument used to prove Proposition~\ref{prop:main_upper} shows that every $n$-vertex directed graph with cop number 1 has capture time $O(n^2)$.  To establish the matching lower bound, we need only construct a strongly-connected protected directed graph $G$ with cop number 1 and capture time $\Omega(n^2)$; we may then apply Lemma~\ref{lem:protected} to obtain a corresponding protected undirected graph.

We use a construction similar to that used in Theorem~\ref{thm:main} (with $k=1)$, with only a few modifications.  (Since $k=1$, to simplify the notation, we write $c_i$, $r_{p-1}$, $q$, and $s$ in place of $c_{i,0}$, $r_{p-1}^0$, $q_0$, and $s_0$, respectively.)  First, we add protected loops at every vertex of $G$.  Next, we add a new vertex $\psi$, unprotected edges from every vertex on the cop vertex to $\psi$, an unprotected edge from $\psi$ to $\omega$, and an unprotected edge from $\omega$ to $\psi$.  (These new edges allow the cop to return to $\omega$, but he must take two steps to do so; this gives the robber time to escape back to the starter clique.)  We also add an unprotected edge from the escape vertex $s$ to $\omega$.  The graph is now strongly-connected.  From any vertex on the cop track, one can reach $\omega$ by way of $\psi$; from a vertex on the robber track, one can reach $\omega$ by way of $s$; from any vertex in the starter clique, one can reach $\omega$ by first entering the robber track.  From $\omega$, one can proceed directly to any vertex except the single vertex in the starter clique, which we can reach from $s$.  Thus for every vertices $u$ and $v$ in $G$ there is a path from $u$ to $\omega$ to $v$, so $G$ is strongly-connected.

Additionally, for each vertex $c_i$ on the cop track, we add a second vertex $c'_{i}$ with the same in-neighbors and out-neighbors as $c_{i}$.  Likewise, for each vertex $r_i$, we add a twin vertex $r'_i$.  Finally, for all $i \in \{0, \dots, q-1\}$ and all $j \in \{0, \dots p-1\}$ we add unprotected edges from $c_{i}$ to $r_j$ and from $c'_{i}$ to $r'_j$.  These edges will let the cop force the robber to move forward in his track, rather than following loops.  Note that by construction of the $c'_i$ and $r'_i$, there are also edges from $c_{q-1}$ to $r'_{p-1}$ and from $c'_{q-1}$ to $r_{p-1}$; as in Theorem~\ref{thm:main}, these edges will allow the cop to eventually capture the robber.

It is clear that $G$ is reflexive and strongly-connected.  To see that $G$ has cop number 1, we explain how one cop can capture the robber.  The cop begins on $\omega$.  To avoid immediate capture, the robber must begin in the starter clique.  The cop now moves to $c_0$, forcing the robber to leave the clique.  The robber must move to $r'_0$ (note that moving to $r_0$ would result in capture).  The cop next moves to $c'_1$.  The robber cannot remain where he is, nor can he move to the escape vertex $s$ or to $r'_1$; his only option is to move to $r_1$.  In response the cop moves to $c_2$, and so on.  As in the proof of Theorem~\ref{thm:main}, this process continues until the cop occupies $c_{q-1}$ (respectively, $c'_{q-1}$) while the robber occupies $r_{p-2}$ (resp. $r'_{p-2}$) on the robber's turn; the robber has no way to avoid capture on the cop's next turn.

Finally, we give a robber strategy showing that $G$ has capture time $\Omega(n^2)$.  The robber begins in the starter clique and remains there until the cop moves to either $c_0$ or $c'_0$.  The robber then moves to either $r'_0$ or $r_0$, respectively.  If the cop moves to $c_1$ then the robber moves to $r'_1$; if the cop moves to $c'_1$ then the robber moves to $r_1$; if the cop remains in place, then so does the robber; if the cop moves anywhere else, then the robber moves to $s$ and subsequently back to the starter clique.  The robber plays similarly on all future turns.  Under optimal play by the cop, the robber can never return to the starter clique, so we may suppose both players remain on their tracks.  Moreover, under optimal play the cop clearly never remains in place, since this simply wastes a turn.  Thus the cop and robber both keep moving forward along their tracks until the cop occupies $c_{q-1}$ (respectively, $c'_{q-1}$) while the robber occupies $r_{p-2}$ (resp. $r'_{p-2}$) on the robber's turn, after which the cop's win is ensured.  As in Theorem~\ref{thm:main}, choosing $p$ and $q$ to be sufficiently large consecutive primes now yields the desired conclusion.
\end{proof}

\section{Computational Complexity}\label{sec:complexity}

We close the paper by mentioning an interesting corollary of Theorem~\ref{thm:directed} in the area of computational complexity.  Let $\textsc{C\&R}(G,k)$ denote the decision problem associated with determining whether $k$ cops can capture a robber on the undirected graph $G$.  Goldstein and Reingold \cite{GR95} conjectured in 1995 that $\textsc{C\&R}$ is complete for the complexity class EXPTIME -- the class of decision problems solvable in time $O(2^{p(n)})$ for some polynomial $p$ (where, as usual, $n$ denotes the size of the input).  This conjecture was recently affirmed by the author using a rather involved argument (see~\cite{Kin15}).  As it turns out, Theorem \ref{thm:directed} yields a proof that is considerably shorter and, arguably, more elegant.

In addition to $\textsc{C\&R}$, we will need to refer to the following decision problems corresponding to variants of Cops and Robbers:
\begin{itemize}
\item $\textsc{C\&Rp}(G,k)$, wherein $G$ is a protected undirected graph;
\item $\textsc{C\&Rpd}(G,k)$, wherein $G$ is a protected directed (not necessarily reflexive) graph in which each vertex has at least one in-neighbor and one out-neighbor;
\item $\textsc{C\&Rdsc}(G,k)$, wherein $G$ is a strongly-connected directed reflexive graph.
\end{itemize}
While Goldstein and Reingold could not resolve the complexity of $\textsc{C\&R}$, they did show that $\textsc{C\&Rdsc}$ is EXPTIME-complete (\cite{GR95}, Theorem 4).  This result, in conjunction with Theorem~\ref{thm:directed} and Mamino's result that $\textsc{C\&Rp}$ reduces to $\textsc{C\&R}$, yields a short proof that $\textsc{C\&R}$ is EXPTIME-complete.

\begin{cor}
$\textsc{C\&R}$ is EXPTIME-complete.
\end{cor}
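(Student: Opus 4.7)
The plan is to prove EXPTIME-completeness by establishing membership in EXPTIME and EXPTIME-hardness separately. For membership, I would invoke the standard observation underlying the proof of Proposition \ref{prop:main_upper}: a game of Cops and Robbers on an $n$-vertex graph with $k \le n$ cops has at most $n\binom{n+k-1}{k} = 2^{O(n)}$ distinct configurations, and the winner from each configuration can be computed by backward induction in time polynomial in the number of configurations. This yields an algorithm running in time $2^{O(n)}$, placing $\textsc{C\&R}$ in EXPTIME.

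The substance of the proof is in the hardness direction, and the idea is a chain of polynomial-time reductions
$$\textsc{C\&Rdsc} \le_p \textsc{C\&Rpd} \le_p \textsc{C\&Rp} \le_p \textsc{C\&R}.$$
The starting point is Goldstein and Reingold's theorem that $\textsc{C\&Rdsc}$ is EXPTIME-complete. The first reduction is essentially immediate: every strongly-connected reflexive digraph is a protected directed graph (with no edges designated protected), and reflexivity guarantees that each vertex has both an in-neighbor and an out-neighbor via its own loop. The second reduction is exactly Theorem \ref{thm:directed}: the graph $H$ it produces has $(3k+3)|V(G)|+8k+3$ vertices, is constructible in polynomial time, and satisfies $\cnumprot(H)=\cnumdir(G)$. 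The third reduction is Mamino's construction (Lemma \ref{lem:protected}), which in polynomial time produces an ordinary graph $H'$ with $\cnum(H')=\cnumprot(H)$.

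The one wrinkle --- and the only place requiring care --- is the restriction $k \ge 2$ in Theorem \ref{thm:directed}. To handle this, I would observe that instances of $\textsc{C\&Rdsc}$ with $k=1$ can be decided in polynomial time using the standard characterization of cop-win graphs, so the EXPTIME-hardness of $\textsc{C\&Rdsc}$ persists when restricted to instances with $k \ge 2$. On such instances Theorem \ref{thm:directed} applies directly, and composing the three reductions with Goldstein and Reingold's hardness result then yields the EXPTIME-hardness of $\textsc{C\&R}$, completing the proof. The main conceptual work --- the reduction from the directed setting to the undirected setting --- has already been done inside Theorem \ref{thm:directed}, so no further obstacles remain beyond checking that each step of the chain is indeed polynomial-time and preserves the cop number of the instance.
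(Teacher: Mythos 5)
Your proof follows exactly the paper's argument: the chain $\textsc{C\&Rdsc} \le_p \textsc{C\&Rpd} \le_p \textsc{C\&Rp} \le_p \textsc{C\&R}$ built from Goldstein--Reingold's hardness result, Theorem~\ref{thm:directed}, and Mamino's Lemma~\ref{lem:protected}, together with the easy EXPTIME membership. Your extra remark disposing of the $k=1$ case (which is polynomial-time decidable, so hardness survives the restriction to $k \ge 2$ required by Theorem~\ref{thm:directed}) is a sound detail that the paper leaves implicit.
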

\begin{proof}
$\textsc{C\&R}$ is easily seen to belong to EXPTIME, so it suffices to show that it is EXPTIME-hard.  $\textsc{C\&Rdsc}$ trivially reduces to $\textsc{C\&Rpd}$, since an unprotected directed graph can be viewed as a protected directed graph in which each edge happens to be unprotected.  Theorem~\ref{thm:directed} shows that $\textsc{C\&Rpd}$ reduces to $\textsc{C\&Rp}$.  Finally, $\textsc{C\&Rp}$ reduces to $\textsc{C\&R}$ (\cite{Mam13}, Lemma 3.1).  Since $\textsc{C\&Rdsc}$ is EXPTIME-hard (\cite{GR95}, Theorem 4), it follows that $\textsc{C\&R}$ is also EXPTIME-hard.
\end{proof}

\end{section}
\end{document}